\theoremstyle{plain}
\newtheorem*{thm*}{Theorem}
\newtheorem{thm}{Theorem}[section]
\newtheorem{prop}[thm]{Proposition}
\newtheorem{lem}[thm]{Lemma}
\newtheorem{cor}[thm]{Corollary}
\theoremstyle{definition}
\newtheorem{dfn}[thm]{Definition}
\newtheorem{rem}[thm]{Remark}
\newtheorem{ntt}[thm]{Notation}
\numberwithin{equation}{thm}
\newcommand{\beq}{\begin{equation}}
\newcommand{\eeq}{\end{equation}}
\newcommand{\ZZ}{\mathbb{Z}}
\newcommand{\NN}{\mathbb{N}}
\newcommand{\kk}{\Bbbk}
\newcommand{\WW}{\mathbb{W}}
\newcommand{\mc}{\mathcal}
\newcommand{\nonzero}{\setminus \{0\}}
\DeclareMathOperator{\GKdim}{GKdim}
\DeclareMathOperator{\End}{End}
\DeclareMathOperator{\Aut}{Aut}
\DeclareMathOperator{\spn}{span}
\DeclareMathOperator{\Der}{Der}
\DeclareMathOperator{\ab}{ab}
\newcommand{\UU}{U_{\kk(t)}}
\newcommand{\del}{\partial}
\newcommand{\diff}[1]{\frac{d}{d#1}}
\newcommand{\pdiff}[1]{\frac{\del}{\del#1}}
\newcommand\restr[2]{{
  \left.\kern-\nulldelimiterspace 
  #1 
  \vphantom{\big|}
  \right|_{#2}
  }}
\title{Maximal dimensional subalgebras of general Cartan type Lie algebras}
\author{Jason Bell}
\address[Bell]{Department of Pure Mathematics, University of Waterloo, Waterloo, ON, Canada N2L 3G1}
\email{jpbell@uwaterloo.ca}
\author{Lucas Buzaglo}
\address[Buzaglo]{Department of Mathematics, UC San Diego, La Jolla, CA 92093-0112, USA}
\email{lbuzaglo@ucsd.edu}
\keywords{One-sided Witt algebra, classification, subalgebras, Cartan type Lie algebra, Dixmier conjecture, universal enveloping algebra, non-noetherian}
\subjclass[2020]{Primary: 17B66, 17B35, 16P40. Secondary: 17B65, 17B68.}
\thanks{The first author was supported by NSERC grant RGPIN-2022-02951.}
\begin{document}

\begin{abstract}
    Let $\kk$ be a field of characteristic zero and let $\WW_n = \Der(\kk[x_1,\cdots,x_n])$ be the $n^{\text{th}}$ general Cartan type Lie algebra. In this paper, we study Lie subalgebras $L$ of $\WW_n$ of maximal Gelfand--Kirillov (GK) dimension, that is, with $\GKdim(L) = n$.

    For $n = 1$, we completely classify such $L$, proving a conjecture of the second author. As a corollary, we obtain a new proof that $\WW_1$ satisfies the Dixmier conjecture, in other words, $\End(\WW_1) \nonzero = \Aut(\WW_1)$, a result first shown by Du.

    For arbitrary $n$, we show that if $L$ is a GK-dimension $n$ subalgebra of $\WW_n$, then $U(L)$ is not (left or right) noetherian.
\end{abstract}

\maketitle

\section*{Introduction}

Throughout, $\kk$ denotes a field of characteristic zero. All vector spaces are $\kk$-vector spaces. For brevity, we say that a $\kk$-algebra is \emph{noetherian} if it is left and right noetherian.

The \emph{Lie algebras of general Cartan type} (also known as \emph{Cartan type $W$ Lie algebras}), are denoted $\WW_n = \Der(\kk[x_1,\cdots,x_n])$ for $n \geq 1$. For simplicity of notation, we write $\WW_1 = \Der(\kk[t]) = \kk[t]\del$, where $\del = \diff{t}$. Subalgebras of these Lie algebras form an important class of infinite-dimensional Lie algebras, which includes, for example, all the other Cartan type Lie algebras. In this paper, we study Lie subalgebras of $\WW_n$ of maximal Gelfand--Kirillov (GK) dimension, in other words, subalgebras of GK-dimension $n$.

We begin by studying subalgebras of $\WW_1$ of maximal GK-dimension (equivalently, infinite-dimensional subalgebras). Letting $f \in \kk[t] \setminus \kk$, some examples of such subalgebras include:
\begin{enumerate}
    \item $f\WW_1$, known as \emph{submodule-subalgebras}, because they are also $\kk[t]$-submodules of $\WW_1$ under the natural action of $\kk[t]$. These have finite codimension in $\WW_1$.\label{item:submodule-subalgebra}
    \item $L(f) = \WW_1 \cap \Der(\kk[f])$, where this intersection is taken in $\Der(\kk(t)) = \kk(t)\del$ upon identifying $\Der(\kk[f]) = \frac{1}{f'}\kk[f]\del \subseteq \kk(t)\del$. These are not $\kk[t]$-submodules of $\WW_1$, but they are still $\kk[f]$-submodules. If $\deg(f) \geq 2$, then $L(f)$ has infinite codimension in $\WW_1$.\label{item:L(f)}
    \item $\kk[f]$-submodules of $L(f)$. Note that this generalises \eqref{item:submodule-subalgebra} and \eqref{item:L(f)}.\label{item:L(f,g)}
\end{enumerate}
We show that, up to a finite-dimensional vector space, this is an exhaustive list of infinite-dimensional subalgebras of $\WW_1$.

\begin{thm}[Theorem \ref{thm:main}]\label{thm:intro main}
    Let $L$ be an infinite-dimensional subalgebra of $\WW_1$. Then there exist $f \in \kk[t] \setminus \kk$ and a $\kk[f]$-submodule $L'$ of $L(f)$ such that
    $$L' \subseteq L \subseteq L(f).$$
    In particular, $L$ has finite codimension in $L(f)$.
\end{thm}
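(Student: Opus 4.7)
The plan is to construct the polynomial $f$ first and then to extract the $\kk[f]$-submodule $L'$.

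For the construction of $f$, I would define $K \subseteq \kk(t)$ to be the $\kk$-subfield generated by the ratios $g/g_0$ for all $g\partial, g_0\partial \in L$ with $g_0 \neq 0$. A direct computation using
\[
D(g_1/g_0) = (g/g_0)\cdot (g_0 g_1' - g_0' g_1)/g_0
\]
(for $D = g\partial \in L$, and observing that $g_0 g_1' - g_0' g_1$ is the coefficient of $[g_0\partial, g_1\partial] \in L$) shows that $K$ is $L$-stable. Since $L$ is infinite-dimensional, $K \neq \kk$, so by L\"uroth's theorem $K = \kk(\varphi)$ for some $\varphi \in \kk(t)\setminus \kk$. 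Fixing any nonzero $g_0\partial \in L$, every $g\partial \in L$ lies in $g_0 K \cap \kk[t]$, forcing this $\kk$-subspace of $\kk[t]$ to be infinite-dimensional. A ramification analysis of the map $\varphi\colon \PP^1_t \to \PP^1$ shows that $g_0 K \cap \kk[t]$ is finite-dimensional unless $\infty \in \PP^1_t$ is totally ramified under $\varphi$; in the latter case, a M\"obius transform of $\varphi$ yields $f \in \kk[t]\setminus \kk$ with $\kk(f) = K$. Combined with the identity $\kk(f) \cap \kk[t] = \kk[f]$ (integral closure of $\kk[f]$ in $\kk(f)$), this gives that $\kk[f]$ is $L$-stable, so $L \subseteq L(f)$.

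For the extraction of $L'$, note that any nonzero $\kk[f]$-submodule of $L(f)$ already has finite codimension, since $L(f)$ is a rank-one $\kk[f]$-module (isomorphic to an ideal of $\kk[f]$ via $h\partial_f \leftrightarrow h$). It therefore suffices to produce a nonzero $\kk[f]$-submodule of $L(f)$ contained in $L$. After changing coordinates to $s = f$, which embeds $L(f)$ into $\kk[s]\partial_s$, the problem becomes: if $L \subseteq \kk[s]\partial_s$ is an infinite-dimensional Lie subalgebra whose ratio field in $\kk(s)$ equals $\kk(s)$, then $L$ contains a nonzero $\kk[s]$-submodule of $\kk[s]\partial_s$. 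I would prove this via leading terms: the bracket $[h_1\partial, h_2\partial] = (h_1h_2' - h_1'h_2)\partial$ has degree $\deg h_1 + \deg h_2 - 1$ whenever the leading degrees of $h_1, h_2$ differ, so the degree set $S = \{\deg h : h\partial \in L, \ h \neq 0\}$ is nearly closed under $(a,b)\mapsto a+b-1$. The ratio-field hypothesis supplies enough pairs of elements of $S$ whose shifts by $-1$ are coprime; a numerical-semigroup-style argument then forces $S$ to be cofinite in $\NN$, and a leading-term reconstruction produces the desired $\kk[s]$-submodule inside $L$.

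The main obstacle I anticipate is the final step: converting the cofiniteness of $S$, together with the Lie-bracket structure on $L$, into a genuine $\kk[s]$-submodule contained in $L$ (rather than merely a $\kk$-subspace of finite codimension). The earlier steps---stability of $K$, the application of L\"uroth's theorem, and the ramification criterion for polynomial generators---are standard in spirit and amount to careful bookkeeping with rational functions on $\PP^1$.
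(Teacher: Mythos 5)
Your outline reproduces the easy half of the argument but leaves the hard half unproved. The construction of $f$ (stability of the ratio field under the derivations in $L$, L\"uroth, ramification at infinity to get a polynomial generator, and $\kk(f)\cap\kk[t]=\kk[f]$ to conclude $L\subseteq L(f)$) is sound and is essentially the content of \cite[Theorem 5.3]{Buzaglo}, which the paper cites as Proposition \ref{prop:contained in L(f)}. The reduction of the remaining work to showing that the image of $L$ in $\kk[s]\del_s$ has finite codimension is also correct. The genuine gap is the assertion that ``the ratio-field hypothesis supplies enough pairs of elements of $S$ whose shifts by $-1$ are coprime.'' Writing $d$ for the gcd of the shifted degrees, the standard bracket-of-leading-terms argument (Lemma \ref{lem:large degree} in the paper) only gives you all sufficiently large multiples of $d$; to get cofiniteness you must prove $d=1$, and nothing about the ratio field forces this at the level of degrees alone. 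Indeed, the subfield of $\kk(t)$ generated by ratios of polynomials all of odd degree can perfectly well be all of $\kk(t)$: for instance $t^3/t=t^2$ and $(t^3+t^2)/t=t^2+t$ already generate $\kk(t)$. So knowing $F(L)=\kk(s)$ does not, by any degree-set or numerical-semigroup bookkeeping, rule out $d>1$. The obstruction lives in the \emph{lower-order} coefficients of elements of $L$ and in the Lie relations among them, which your sketch never touches.

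This is exactly where the paper does its real work (Proposition \ref{prop:degree of f}): it uses a Hensel's lemma iteration (Lemma \ref{lem:monomial}) to write a single element of $L$ as $s^{kd+1}\del_s$ for a twisted variable $s=t+{\rm lower~degree~terms}\in\kk((t^{-1}))$, and then a delicate bracket computation (Proposition \ref{prop:Veronese}, yielding the relation $(m+k)\alpha=(n+k)\beta$ among subleading coefficients) to show $L\cap V_d(s)\neq 0$ forces $L\subseteq V_d(s)$, whence $F(L)\subseteq\kk((s^{-d}))$ and $\deg(f)\geq d$. The passage to the non-polynomial variable $s$ is essential and has no analogue in your plan. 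A secondary, smaller issue: even granting cofiniteness of the degree set, upgrading ``finite codimension in $\kk[s]\del_s$'' to an actual nonzero $\kk[s]$-submodule contained in $L$ is not a ``leading-term reconstruction'' but the Petukhov--Sierra classification (Proposition \ref{prop:Alexey}); you correctly flag this as an obstacle, and it is at least a known citable result, unlike the $d=1$ step, which is the heart of the theorem and is missing from your argument.
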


This establishes \cite[Conjecture 5.1]{Buzaglo}. We remark that the case where $L$ has finite codimension in $\WW_1$ is due to Petukhov and Sierra \cite[Proposition 3.2.7]{PetukhovSierra}.

The inclusion $L \subseteq L(f)$ is \cite[Theorem 5.3]{Buzaglo}, where $f$ is chosen in a ``canonical" way: it is the generator of the \emph{field of ratios} $F(L) = \kk(f)$ (see Definition \ref{dfn:set/field of ratios}). To prove Theorem \ref{thm:intro main}, we show that $L$ has finite codimension in $L(f)$, which is enough to construct $L' \subseteq L$. That $L$ has finite codimension in $L(f)$ is shown by exploiting Hensel's lemma to write an element $g\del \in L$ as $g\del = s^d\del_s$, where $s = t + \text{lower degree terms} \in \kk((t^{-1}))$ and $\del_s = \frac{1}{s'}\del$. We then prove that every element of $L$ can be written as a (possibly infinite) linear combination of terms of the form $s^n\del_s$, from which we deduce that $L$ has finite codimension in $L(f)$.

We then present some applications of Theorem \ref{thm:intro main}. First, we consider the noetherianity of universal enveloping algebras of subalgebras of $\WW_1$. The question of whether it is possible for an infinite-dimensional Lie algebra to have a noetherian universal enveloping algebra is long-standing, having first appeared fifty years ago in Amayo and Stewart's book on infinite-dimensional Lie algebras \cite[Question 27]{AmayoStewart}.

While it is widely believed that enveloping algebras of infinite-dimensional Lie algebras are never noetherian \cite[Conjecture 0.1]{SierraWalton}, there are very few examples whose non-noetherianity is known. One of the most significant milestones since the question was posed by Amayo and Stewart was Sierra and Walton's proof that $U(\WW_1)$ is not noetherian \cite[Theorem 0.5]{SierraWalton}.

As a consequence of the classification of subalgebras from Theorem \ref{thm:intro main}, it follows that an infinite-dimensional subalgebra of $\WW_1$ is isomorphic to a subalgebra of $\WW_1$ of finite codimension. Enveloping algebras of finite codimension subalgebras of $\WW_1$ are known to be non-noetherian \cite{Buzaglo}. This immediately implies that enveloping algebras of infinite-dimensional subalgebras of $\WW_1$ are not noetherian, confirming \cite[Conjecture 4.1]{Buzaglo}.

\begin{cor}[Corollary \ref{cor:noetherian}]\label{cor:intro noetherian}
    Let $L$ be an infinite-dimensional subalgebra of $\WW_1$. Then $U(L)$ is not noetherian.
\end{cor}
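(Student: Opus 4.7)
The plan is to combine Theorem \ref{thm:intro main} with the fact from \cite{Buzaglo} that enveloping algebras of finite codimension subalgebras of $\WW_1$ are non-noetherian. The essential content is already packaged into Theorem \ref{thm:intro main}, so the proof of the corollary should be essentially formal: I aim to exhibit $L$ as isomorphic to a finite codimension subalgebra of $\WW_1$, whence the cited result applies.

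Given an infinite-dimensional subalgebra $L \subseteq \WW_1$, Theorem \ref{thm:intro main} furnishes some $f \in \kk[t] \setminus \kk$ with $L \subseteq L(f)$ and $L$ of finite codimension in $L(f)$. Since $\kk[f]$ is a polynomial ring in one variable over $\kk$, the substitution $f \leftrightarrow t$ induces a Lie algebra isomorphism $\Der(\kk[f]) \cong \WW_1$, and by definition $L(f) \subseteq \Der(\kk[f])$. It therefore suffices to verify that $L(f)$ has finite codimension in $\Der(\kk[f])$. Writing a general element of $\Der(\kk[f])$ as $h(f)\del_f$, membership in $L(f) = \WW_1 \cap \Der(\kk[f])$ is precisely the condition that $f'$ divides $h(f)$ in $\kk[t]$; this codimension is then bounded by $\dim_\kk \kk[t]/f'\kk[t] = \deg(f) - 1$, and so is finite.

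Composing the two finite codimension inclusions $L \subseteq L(f) \subseteq \Der(\kk[f]) \cong \WW_1$ exhibits $L$ as isomorphic to a finite codimension subalgebra of $\WW_1$, and directly invoking the result of \cite{Buzaglo} then yields that $U(L)$ is not noetherian. I do not expect any real obstacle at this stage: having granted Theorem \ref{thm:intro main}, every remaining step is either a definitional unwinding or a short finite-dimensionality check performed inside the ambient Lie algebra $\Der(\kk(t))$. The entire difficulty of the corollary is absorbed into the classification theorem.
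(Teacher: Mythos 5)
Your proposal is correct and takes essentially the same route as the paper: reduce via Theorem \ref{thm:intro main} to $L$ having finite codimension in $L(f)$, observe that $L(f)$ sits with finite codimension inside $\Der(\kk[f]) \cong \WW_1$ (the paper records this as $L(f) \cong h_f\WW_1$; your bound by $\dim_\kk \kk[t]/f'\kk[t] = \deg(f)-1$ is a correct variant of the same check), and invoke the known non-noetherianity of enveloping algebras of finite-codimension subalgebras of $\WW_1$.
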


Another consequence of Theorem \ref{thm:intro main} is that $\WW_1$ satisfies the Dixmier conjecture. While this had previously been established in \cite{Du}, we provide a more conceptual proof by showing that $\WW_1$ does not have any proper infinite-dimensional simple subalgebras.

\begin{cor}[Corollary \ref{cor:Dixmier}]
    The Lie algebra $\WW_1$ satisfies the Dixmier conjecture. In other words, $\End(\WW_1) \nonzero = \Aut(\WW_1)$.
\end{cor}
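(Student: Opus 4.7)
The plan is to derive the Dixmier conjecture from the stronger structural claim that $\WW_1$ contains no proper infinite-dimensional simple Lie subalgebra. Granting this, the corollary is immediate: $\WW_1$ is simple, so any nonzero Lie endomorphism $\phi$ of $\WW_1$ has trivial kernel, whence $\phi(\WW_1) \cong \WW_1$ is an infinite-dimensional simple subalgebra of $\WW_1$; by the structural claim it must equal $\WW_1$, so $\phi$ is bijective and hence an automorphism.

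To prove the structural claim, I would take a proper infinite-dimensional subalgebra $L \subsetneq \WW_1$ and aim to produce a proper nonzero Lie ideal of $L$. Theorem \ref{thm:intro main} furnishes $f \in \kk[t] \setminus \kk$ with $L \subseteq L(f)$ and $\dim_\kk(L(f)/L) < \infty$, and the argument splits into the cases $\deg f = 1$ and $\deg f \geq 2$.

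If $\deg f = 1$, then $L(f) = \WW_1$ and $L$ has finite codimension in $\WW_1$. I would take
\[
I := \{x \in L : [x, \WW_1] \subseteq L\};
\]
a routine Jacobi computation shows $I$ is a Lie ideal of $L$, and the adjoint action yields a Lie homomorphism $L \to \End_\kk(\WW_1/L)$ with kernel $I$, so $L/I$ is finite-dimensional and $I$ is infinite-dimensional. If $I = L$ then $L$ would be a nonzero ideal of the simple algebra $\WW_1$, forcing $L = \WW_1$ against properness; so $I$ is the proper nonzero ideal sought.

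If $\deg f \geq 2$, I would fix a critical value $c$ of $f$ and work inside $\Der(\kk[f]) = \kk[y]\del_y$ with $y = f$ and $\del_y = \del/f'$. The subspaces $J_k := (y - c)^k \kk[y]\del_y$ form a descending chain of Lie subalgebras with $L \subseteq L(f) \subseteq J_1$; a direct bracket computation shows that each $J_{k+1}$ is an ideal of $J_1$, and $\bigcap_k J_k = 0$. Let $k^*$ be the largest integer with $L \subseteq J_{k^*}$; then $L \cap J_{k^*+1}$ is an ideal of $L$, is proper by the maximality of $k^*$, and is infinite-dimensional because $J_{k^*}/J_{k^*+1}$ is one-dimensional while $L$ is not. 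Either way $L$ has a proper nonzero ideal, contradicting simplicity. The one step I expect to require care is the bracket calculation confirming $J_{k+1}$ is an ideal of $J_1$: one must verify that $[(y-c)u\,\del_y, (y-c)^{k+1}v\,\del_y]$ retains a factor of $(y-c)^{k+1}$ after applying the product rule, which follows from a short direct expansion.
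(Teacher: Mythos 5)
Your top-level reduction is exactly the paper's: Proposition \ref{prop:simple subalgebra} states that $\WW_1$ has no proper infinite-dimensional simple subalgebras, and the corollary follows because simplicity of $\WW_1$ forces any nonzero endomorphism to be injective. Where you differ is in how non-simplicity of a proper infinite-dimensional $L$ is established. The paper's mechanism is a single perfectness observation: $[h\WW_1,h\WW_1]=h^2\WW_1$, so a nonzero perfect subalgebra of $h\WW_1$ (with $h$ non-constant) would lie in $\bigcap_k h^{2^k}\WW_1=0$; this disposes of the case $\deg f\geq 2$ via $L\subseteq L(f)\cong h_f\WW_1$, and, combined with Proposition \ref{prop:Alexey}, of the finite-codimension case as well. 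You instead exhibit explicit proper nonzero ideals: the ideal $I=\{x\in L: [x,\WW_1]\subseteq L\}$ in the finite-codimension case (which has the mild advantage of not needing Proposition \ref{prop:Alexey} at all), and $L\cap J_{k^*+1}$ for the $(y-c)$-adic filtration otherwise. Both constructions check out: $I$ is an ideal by Jacobi and has finite codimension as the kernel of the adjoint representation of $L$ on $\WW_1/L$; the computation $[(y-c)u\del_y,(y-c)^{k+1}v\del_y]=(y-c)^{k+1}\bigl(kuv+(y-c)(uv'-u'v)\bigr)\del_y$ confirms $J_{k+1}$ is an ideal of $J_1$; and maximality of $k^*$ together with finite-dimensionality of $J_{k^*}/J_{k^*+1}$ gives properness and infinite-dimensionality of $L\cap J_{k^*+1}$.

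The one genuine (though easily repaired) gap is your choice of $c$: the paper only assumes $\Char\kk=0$, so $f'$ need not have a root in $\kk$ and $f$ need not have a critical value in $\kk$ (e.g. $\kk=\mathbb{Q}$ and $f=t^3+3t$). What your argument actually needs is a non-unit factor of $h_f$, where $L(f)=h_f(y)\kk[y]\del_y$ and $f'g_f=h_f(f)$: since $\deg f\geq 2$ forces $h_f$ to be non-constant, take any irreducible factor $p(y)$ of $h_f$ and set $J_k=p(y)^k\kk[y]\del_y$. The same bracket computation shows $J_{k+1}$ is an ideal of $J_1$, and $J_{k^*}/J_{k^*+1}$ is now $\deg(p)$-dimensional rather than one-dimensional, which is still all you need.
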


This (re)proves the one-variable case of \cite[Conjecture 1]{Zhao}, which says that the Dixmier conjecture holds for all $\WW_n$. Zhao's conjecture, which is open for two or more variables, is exceptionally difficult and has deep implications; Zhao himself proved that it implies the Jacobian conjecture \cite[Theorem 4.1]{Zhao}.

The last part of the paper is devoted to extending the non-noetherianity result of Corollary \ref{cor:intro noetherian} to maximal dimensional subalgebras of $\WW_n$ for arbitrary $n$.

\begin{thm}[Theorem \ref{thm:subalgebras Wn}]\label{thm:intro subalgebras Wn}
    Let $n \geq 1$ and let $L$ be a subalgebra of $\WW_n$ of GK-dimension $n$. Then $U(L)$ is not noetherian.
\end{thm}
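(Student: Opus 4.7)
The plan is to reduce Theorem~\ref{thm:intro subalgebras Wn} to the one-variable case (Corollary~\ref{cor:intro noetherian}) by a PBW / faithful-flatness transfer. For any Lie subalgebra inclusion $M \subseteq L$, the Poincar\'e--Birkhoff--Witt theorem makes $U(L)$ a free left $U(M)$-module, hence faithfully flat, so any strictly ascending chain of right ideals $J_1 \subsetneq J_2 \subsetneq \cdots$ in $U(M)$ extends to a strictly ascending chain $J_1 U(L) \subsetneq J_2 U(L) \subsetneq \cdots$ in $U(L)$, and symmetrically on the left. Thus if $U(L)$ is noetherian then so is $U(M)$, and it suffices to locate an infinite-dimensional Lie subalgebra $M \subseteq L$ with $U(M)$ non-noetherian. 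Working with subalgebras rather than quotients is essentially unavoidable, because $\WW_n$ is simple for $n \geq 2$ and so $L = \WW_n$ admits no nontrivial Lie-algebra quotient.

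The natural target is an $M$ which surjects onto an infinite-dimensional subalgebra of $\WW_1$, to which Corollary~\ref{cor:intro noetherian} applies directly. For each smooth rational affine curve $V \subseteq \Aa^n$ (so that $\kk[V] \cong \kk[t]$), with defining ideal $I(V) \subseteq \kk[x_1,\ldots,x_n]$, the Lie subalgebra
\[
N_V \;:=\; \{\eta \in \WW_n : \eta(I(V)) \subseteq I(V)\}
\]
admits a surjective restriction homomorphism $\rho_V \colon N_V \twoheadrightarrow \Der(\kk[V]) \cong \WW_1$. Setting $M := L \cap N_V$, the image $\rho_V(M)$ is a subalgebra of $\WW_1$; if it is infinite-dimensional, then the surjection $U(M) \twoheadrightarrow U(\rho_V(M))$ combined with Corollary~\ref{cor:intro noetherian} forces $U(M)$ to be non-noetherian, completing the reduction. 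As a sanity check, when $L = \WW_n$ and $V$ is a coordinate axis, $\rho_V(M)$ is visibly all of $\WW_1$.

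The main obstacle is to exhibit a curve $V$ for which $\rho_V(L \cap N_V)$ is infinite-dimensional, and this is where the hypothesis $\GKdim(L) = n$ must enter. A direct approach is to pick coordinates so that a coordinate axis $V$ sits in sufficiently general position with respect to $L$, and then perform a leading-term analysis under the polynomial-degree filtration on $\WW_n$ to extract infinitely many elements of $L \cap N_V$ with linearly independent $\rho_V$-images, using the $n$-dimensional GK-growth of $L$ to guarantee enough such elements. A contrapositive alternative is to vary $V$ over the family of affine lines in $\Aa^n$ and argue by dimension counting that finite-dimensionality of $\rho_V(L \cap N_V)$ for \emph{every} $V$ would force $\GKdim(L) < n$, contradicting the hypothesis. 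Making one of these approaches work uniformly across all GK-dimension-$n$ subalgebras of $\WW_n$ is the technical crux of the proof.
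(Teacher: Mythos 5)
Your reduction framework is sound as far as it goes: Lemma \ref{lem:subalgebra} (which is exactly your PBW/faithful-flatness observation) does reduce the problem to finding an infinite-dimensional subalgebra $M \subseteq L$ with $U(M)$ non-noetherian, and if some curve $V \cong \Aa^1$ had $\rho_V(L \cap N_V)$ infinite-dimensional, the chain $U(L) \to U(L\cap N_V) \twoheadrightarrow U(\rho_V(L\cap N_V))$ together with Corollary \ref{cor:noetherian} would finish the proof. But the step you defer as ``the technical crux'' is not merely unproven; it is false, so the whole strategy of restricting to invariant curves cannot work. Take $\kk = \CC$, $n = 2$, and let $D$ be a Jouanolou-type polynomial vector field of degree $\geq 2$, which by Jouanolou's theorem admits \emph{no} invariant algebraic curve in $\PP^2$, hence none in $\Aa^2$. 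Set $L = \CC[x_1,x_2]D$; this is a Lie subalgebra since $[fD,gD] = (fD(g)-gD(f))D$, and a leading-term computation (the top-degree part of $D$ is $-x_1^s$ times the Euler field, so $f * g = (\deg f - \deg g)\,x_1^s fg + \text{lower order}$ on homogeneous elements) shows $\GKdim(L) = 2$. Now let $V \subseteq \Aa^2$ be any irreducible curve, with $I(V) = (h)$, $h$ irreducible. Since $V$ is not $D$-invariant, $h \nmid D(h)$, so $fD(h) \in (h)$ forces $h \mid f$; thus every element of $L \cap N_V$ lies in $I(V)\WW_2$ and restricts to the zero derivation of $\kk[V]$. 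Hence $\rho_V(L \cap N_V) = 0$ for \emph{every} smooth rational affine curve $V$, and no choice of $V$ (lines in general position, a dimension count over the family of lines, or otherwise) can produce the required infinite-dimensional image. Even for friendlier examples the choice of $V$ is delicate: for $L = \CC[x_1,x_2](x_1\del_2 - x_2\del_1)$ every real line gives image zero, and only the isotropic lines $x_1 = \pm i x_2$ work.

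The paper avoids restriction to curves entirely. It filters $L$ by order of vanishing of coefficients along the hyperplane $x_1 = \lambda$, setting $d_{ij}(L) = \dim_\kk(L_{ij}/L_{i,j+1})$, and shows: if some $d_{ij}(L)$ with $i \geq 1$ is infinite then $[L_{ij},L_{ij}] \subseteq L_{i,j+1}$ forces an infinite-dimensional abelianisation (Lemmas \ref{lem:derived subalgebra} and \ref{lem:d's are finite}); if $d_{0,0}(L) = \infty$ a normaliser argument plus Krull's intersection theorem and the GK-dimension hypothesis (which guarantees $L_{2,0} \neq 0$, Proposition \ref{prop:L2,0 is infinite}) again gives non-noetherianity (Corollary \ref{cor:d00}). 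Finally, a specialisation argument over $\kk(t)$ shows that if $L_\lambda$ were uniformly finite-dimensional for infinitely many $\lambda$, then $L$ itself would be finite-dimensional. Note that your Jouanolou example is caught immediately by this machinery: $fD \in L_{0,1}$ iff $x_1 \mid f$, so $d_{0,0}(L) = \infty$. If you want to salvage a reduction-to-$\WW_1$ philosophy, you would need a substitute for invariant curves — some formal or valuation-theoretic object attached to $L$ — and it is exactly this kind of difficulty that pushes the paper toward the hyperplane filtration instead.
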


The proof of Theorem \ref{thm:intro subalgebras Wn} is achieved by constructing subalgebras $L_{ij}$ of $L$ such that $L_{ij} \supseteq L_{i,j+1}$ for all $i,j$, and studying the dimensions of successive quotients
$$d_{ij}(L) \coloneqq \dim_\kk(L_{ij}/L_{i,j+1}).$$
One of the key steps is deducing that if $\GKdim(L) = n$ and $d_{0,0}(L) = \infty$, then $U(L)$ is not noetherian (see Corollary \ref{cor:d00}). We then show that (possibly after changing variables), if $\GKdim(L) = n$ then we must have $d_{0,0}(L) = \infty$, allowing us to deduce that $U(L)$ is not noetherian.

\vspace{2mm}

\noindent \textbf{Acknowledgements:} This research was done as part of the second author's PhD at the University of Edinburgh, under the supervision of Prof. Susan J. Sierra. Much of this work was carried out during the second author's visit to the University of Waterloo; he would like to thank the institution for their hospitality and support. This collaboration was made possible by the generous funding provided through the University of Edinburgh's Laura Wisewell Travel Scholarship, and NSERC grant RGPIN-2022-02951.

\section{Preliminaries}\label{sec:preliminaries}

We begin by recalling the definition of a derivation of an associative algebra, and defining the Lie algebras of interest in this paper.

\begin{dfn}
    Let $A$ be a $\kk$-algebra. A \emph{derivation} of $A$ is a $\kk$-linear map $D \colon A \to A$ such that $D(ab) = D(a)b + aD(b)$ for all $a,b \in A$. We denote the set of all derivations of $A$ by $\Der(A)$.

    For $n \geq 1$, we write
    $$\WW_n = \Der(\kk[x_1,\cdots,x_n]) = \kk[x_1,\cdots,x_n]\del_1 + \cdots + \kk[x_1,\cdots,x_n]\del_n,$$
    where $\del_i = \pdiff{x_i}$ denotes differentiation with respect to $x_i$. The Lie algebra $\WW_1$ is the \emph{one-sided Witt algebra}. To simplify notation, we write $\WW_1 = \kk[t]\del$, where $\del = \diff{t}$.
\end{dfn}

In this section, we classify infinite-dimensional subalgebras of the one-sided Witt algebra. The Lie bracket of $\WW_1$ is given by
$$[f\del,g\del] = (fg' - f'g)\del$$
for $f,g \in \kk[t]$. It has basis $\{e_n = t^{n+1}\del \mid n \geq -1\}$ such that $[e_n, e_m] = (m - n)e_{n+m}$. We will also consider the Lie algebras $\kk(t)\del$ and $\kk((t^{-1}))\del$ with the obvious Lie brackets, and view $\WW_1$ as a subalgebra of both of these.

We now introduce some noteworthy subalgebras of $\WW_1$. There is a natural action of $\kk[t]$ on $\WW_1$, and the following subalgebras are precisely the subalgebras of $\WW_1$ that are also $\kk[t]$-submodules.

\begin{dfn}
    A \emph{submodule-subalgebra} of $\WW_1$ is a Lie subalgebra of $\WW_1$ which is also a $\kk[t]$-submodule. They are of the form
    $$f\WW_1 = \{g\del \in \WW_1 \mid f \text{ divides } g\},$$
    for $f \in \kk[t] \nonzero$.
\end{dfn}

Note that submodule-subalgebras of $\WW_1$ have finite codimension in $\WW_1$: we have $\dim_\kk(\WW_1/f\WW_1) = \deg(f)$. Subalgebras of $\WW_1$ of finite codimension were classified in \cite{PetukhovSierra}: they are ``very close" to being submodule-subalgebras. Although the result in \cite{PetukhovSierra} considers subalgebras of the \emph{Witt algebra} $W = \Der(\kk[t,t^{-1}])$, the analogous result is true for subalgebras of $\WW_1$ with a nearly identical proof.

\begin{prop}[{\cite[Proposition 3.2.7]{PetukhovSierra}}]\label{prop:Alexey}
    Let $L$ be a subalgebra of $\WW_1$ of finite codimension. Then there exist $f \in \kk[t] \nonzero$ and $n \in \NN$ such that
    $$f^n\WW_1 \subseteq L \subseteq f\WW_1.$$
\end{prop}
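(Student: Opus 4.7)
The plan is to associate to $L$ two canonical subsets of $\kk[t]$: the $\kk$-subspace $S = \{g \in \kk[t] : g\del \in L\}$, which has codimension in $\kk[t]$ equal to $\codim_{\WW_1} L$, and the set $\Omega = \{h \in \kk[t] : h\WW_1 \subseteq L\} = \{h : h\kk[t] \subseteq S\}$, which is readily seen to be an ideal of $\kk[t]$. The polynomial $f$ in the statement will be taken to be the squarefree radical of a generator of $\Omega$, and $n$ will be chosen so that $f^n$ is a multiple of that generator; the containment $f^n \in \Omega$ will then give $f^n\WW_1 \subseteq L$ immediately.

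The main step, and where I expect the main obstacle, is showing $\Omega \neq 0$, i.e., that $L$ contains a nonzero submodule-subalgebra. The plan is to exploit the closure of $S$ under the Wronskian $W(g_1, g_2) = g_1 g_2' - g_1' g_2$ coming from the Lie bracket $[g_1\del, g_2\del] = W(g_1, g_2)\del$. A direct computation yields the identity $W(g_1 t^j, g_1 t^k) = (k - j)g_1^2 t^{j+k-1}$, so if $g_1, g_1 t, \dots, g_1 t^N \in S$, then $g_1^2 t^m \in S$ for $0 \leq m \leq 2N-2$. A straightforward dimension count, using finite codimensionality of $S$, produces some such $g_1$ for each $N$. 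Iterating these Wronskian identities (with $g_1$, then $g_1^2$, $g_1^4$, and so on) together with further bracket relations should yield a nonzero $h$ with $h\kk[t] \subseteq S$, hence $h \in \Omega$.

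Once $\Omega \neq 0$, write $\Omega = (h_0)$ with $h_0 = \prod_i p_i^{a_i}$ a factorization into distinct monic irreducibles, and set $f = \prod_i p_i$. For $n \geq \max_i a_i$ we have $h_0 \mid f^n$, so $f^n \in \Omega$ and hence $f^n\WW_1 \subseteq L$.

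For the remaining inclusion $L \subseteq f\WW_1$: if $L = \WW_1$ take $f = 1$, $n = 1$; otherwise assume $L \subsetneq \WW_1$, and fix an irreducible $p \mid f$ with $p^a \| h_0$, so that $p^a\WW_1 \subseteq L$. The key computation is that for any $g \in S$ and any $q \in \kk[t]$,
\[
[g\del, p^a q\del] = \bigl(a g p^{a-1} p' q + p^a(g q' - g' q)\bigr) \del \in L,
\]
and the second summand lies in $p^a\WW_1 \subseteq L$, so $g p^{a-1}p' \WW_1 \subseteq L$. If $p \nmid g$, then $\gcd(p, g p') = 1$ (using $p \nmid p'$ in characteristic zero), and the identity $u\WW_1 + v\WW_1 = \gcd(u, v)\WW_1$ gives $p^{a-1}\WW_1 \subseteq L$. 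Iterating this reduction $a$ times forces $\WW_1 \subseteq L$, contradicting $L \subsetneq \WW_1$. Hence $p \mid g$ for every irreducible $p \mid f$ and every $g \in S$, yielding $L \subseteq f\WW_1$.
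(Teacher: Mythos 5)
First, a point of reference: the paper does not actually prove this proposition — it cites \cite[Proposition 3.2.7]{PetukhovSierra} and remarks that the proof for $\WW_1$ is "nearly identical" to the one given there for the Witt algebra. So your proposal can only be judged on its own merits, and it has a genuine gap at exactly the point you flag as the main obstacle: the claim that $\Omega = \{h : h\kk[t] \subseteq S\}$ is nonzero. This is the real content of the proposition (for a general finite-codimension subspace of $\kk[t]$, the largest contained ideal can be zero — take $S = \ker\lambda$ with $\lambda(g) = \sum_j a_j g_j$ for a sequence $(a_j)$ satisfying no linear recurrence — so closure under the Wronskian must be used in an essential way). Your proposed iteration does not deliver it: from $g_1\kk[t]_{\leq N} \subseteq S$ you correctly get $g_1^2\kk[t]_{\leq 2N-2} \subseteq S$, and inductively $g_1^{2^r}\kk[t]_{\leq 2^r(N-2)+2} \subseteq S$, but the polynomial changes at every stage while the "window" stays finite, so at no point do you certify a \emph{fixed} nonzero $h$ with $h t^m \in S$ for all $m$. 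The phrase "together with further bracket relations should yield" is carrying the entire weight of the argument, and it is not clear how to discharge it along these lines; a genuinely different idea (e.g.\ the leading-term/degree analysis of Petukhov--Sierra, or an argument forcing the defining functionals of $S$ to be supported at finitely many points with bounded order) is needed here.

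There is also a concrete error in your last step: from $p^a \| h_0$ you assert $p^a\WW_1 \subseteq L$, but $p^a\WW_1 \subseteq L$ would mean $p^a \in \Omega = (h_0)$, i.e.\ $h_0 \mid p^a$, which fails unless $h_0$ is itself a power of $p$; consequently $p^a q\del$ need not lie in $L$ and the bracket $[g\del, p^aq\del]$ is not available. Fortunately this part is repairable by running the same computation with $h_0$ in place of $p^a$: for $g \in S$ and $q \in \kk[t]$ one has $[g\del, h_0q\del] = (gh_0'q + h_0(gq'-g'q))\del \in L$ and $h_0(gq'-g'q)\del \in h_0\WW_1 \subseteq L$, so $gh_0' \in \Omega$, i.e.\ $h_0 \mid gh_0'$; since $v_p(h_0') = a-1$ in characteristic zero, this forces $p \mid g$ for every irreducible $p \mid h_0$, hence $\rad(h_0) \mid g$ and $L \subseteq f\WW_1$ with $f = \rad(h_0)$. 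With that repair, everything in your argument after the existence of $h_0$ is fine; the missing piece is the existence of $h_0$ itself.
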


Therefore, it remains to classify subalgebras of infinite codimension. In \cite{Buzaglo}, a conjectural classification was given, which we prove in the present paper. In order to state the classification, we need to introduce notation for some subalgebras of $\WW_1$.

\begin{ntt}\label{ntt:L(f,g)}
    For $f,g \in \kk[t] \nonzero$ such that $f'g \in \kk[f]$, we write $L(f,g) = \kk[f]g\del$.
    
    Letting $g_f$ be the unique monic polynomial of minimal degree such that $f'g_f \in \kk[f]$, we write $L(f)$ instead of $L(f,g_f)$. We write $h_f \in \kk[t]$ for the polynomial such that $f'g_f = h_f(f)$.
\end{ntt}

\begin{rem}\label{rem:L(f,g) inclusion}
    \begin{enumerate}
        \item By \cite[Proposition 4.13]{Buzaglo}, $L(f,g) \subseteq L(f)$ for all $f,g \in \kk[t]$ such that $f'g \in \kk[f]$.
        \item We could define $L(f,g) = \kk[f]g\del$ for any pair of polynomials $f,g \in \kk[t]$. However, this is a Lie subalgebra of $\WW_1$ if and only if $f'g \in \kk[f]$ \cite[Lemma 4.12]{Buzaglo}.
    \end{enumerate}
\end{rem}

The Lie algebras $L(f,g)$ seem much more mysterious than submodule-subalgebras, so we briefly explain their origin. The condition that $f'g \in \kk[f]$ implies that $g\del$ is a derivation of $\kk[f]$. Indeed, if $f'g = h(f)$, where $h \in \kk[t]$, then
$$g\del(p(f)) = f'gp'(f) = h(f)p'(f) \in \kk[f],$$
where $p \in \kk[t]$. Hence, $g\del = h(f)\del_f$ as an element of $\Der(\kk[f]) = \kk[f]\del_f$, where $\del_f$ denotes differentiation with respect to $f$. Consequently, we introduce the following notation.

\begin{ntt}
    For $f \in \kk[t] \setminus \kk$, we write $\WW[f] = \Der(\kk[f])$. Submodule-subalgebras of $\WW[f]$ are denoted by $g(f)\WW[f] = g(f)\kk[f]\del_f = \{h(f)\del \mid g \text{ divides } h\}$ (these are $\kk[f]$-submodules of $\WW[f]$).
\end{ntt}

Clearly, $\WW[f] \cong \WW_1$ and $h(f)\WW[f] \cong h\WW_1$ as Lie algebras. By the above discussion, $L(f,g)$ can be viewed as the submodule-subalgebra $h(f)\WW[f]$ of $\WW[f]$, where $h \in \kk[t]$ such that $f'g = h(f)$. We therefore immediately see that $L(f,g) \cong h\WW_1$.

The Lie algebra $\WW[f]$ can be naturally viewed as a subalgebra of $\kk(t)\del$ by identifying $\del_f$ with $\frac{1}{f'}\del$, which is the unique extension of $\del_f$ to a derivation of $\kk(t)$. In other words, we make the identification $\WW[f] = \frac{1}{f'}\kk[f]\del \subseteq \kk(t)\del$. Now, $L(f) \subseteq \WW_1 \cap \WW[f]$, viewing this as an intersection in $\kk(t)\del$. It is therefore natural to ask whether $L(f) = \WW_1 \cap \WW[f]$. The following result answers this question positively.

\begin{lem}
    Let $f \in \kk[t] \setminus \kk$. Then $L(f) = \WW_1 \cap \WW[f]$.
\end{lem}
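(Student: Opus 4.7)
The inclusion $L(f) \subseteq \WW_1 \cap \WW[f]$ is noted in the discussion preceding the lemma, so my plan is to establish only the reverse inclusion. Unpacking the identification $\WW[f] = \frac{1}{f'}\kk[f]\del$ inside $\kk(t)\del$, an element $p\del \in \WW_1$ lies in $\WW[f]$ if and only if $p = q(f)/f'$ for some $q(f) \in \kk[f]$, i.e.\ $f'p \in \kk[f]$. Setting $I = \{p \in \kk[t] : f'p \in \kk[f]\}$, the lemma reduces to showing that $I = \kk[f]g_f$. One containment is immediate: by the definitions of $g_f$ and $h_f$, for every $a \in \kk[t]$ we have $f' \cdot a(f)g_f = a(f)h_f(f) \in \kk[f]$, so $\kk[f]g_f \subseteq I$.

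The main step is the reverse inclusion $I \subseteq \kk[f]g_f$. First I would check that $I$ is a $\kk[f]$-submodule of $\kk[t]$: closure under addition is clear, and if $p \in I$ and $a \in \kk[t]$, then $f' \cdot a(f)p = a(f) \cdot f'p \in \kk[f]$. The crux is then to show that $I$ has rank at most one over $\kk[f]$. For any two nonzero elements $p_1, p_2 \in I$, writing $f'p_i = q_i(f) \in \kk[f]$, one has
\[
    q_2(f)\, p_1 \;=\; \frac{q_1(f)\,q_2(f)}{f'} \;=\; q_1(f)\, p_2,
\]
which yields the nontrivial $\kk[f]$-linear relation $q_2(f)p_1 - q_1(f)p_2 = 0$.

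Since $\kk[t]$ is a free $\kk[f]$-module of rank $\deg f$ and $\kk[f]$ is a PID, $I$ is free over $\kk[f]$; combined with the above, it is free of rank exactly one (it contains the nonzero element $g_f$). If $u$ generates $I$, then every nonzero element $a(f)u \in I$ has $t$-degree at least $\deg u$, so the monic polynomial of minimal degree in $I$---which is $g_f$ by definition---must be a scalar multiple of $u$. Hence $I = \kk[f]g_f$, and unwinding the reduction gives $\WW_1 \cap \WW[f] = L(f)$. I expect the only mildly nonobvious step to be the rank-one argument; once the identity $q_2(f)p_1 = q_1(f)p_2$ is written down, the rest is routine bookkeeping.
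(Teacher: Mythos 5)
Your proof is correct, but it takes a genuinely more self-contained route than the paper. Both arguments begin with the same reduction: an element of $\WW_1 \cap \WW[f]$ is simultaneously $g\del$ with $g \in \kk[t]$ and $\frac{h(f)}{f'}\del$ with $h \in \kk[t]$, hence $f'g \in \kk[f]$. At that point the paper simply concludes $g\del \in L(f,g) \subseteq L(f)$ by citing the inclusion $L(f,g) \subseteq L(f)$ from Remark \ref{rem:L(f,g) inclusion} (i.e.\ \cite[Proposition 4.13]{Buzaglo}), so its proof is essentially a two-line deduction modulo that external result. You instead reprove the needed fact from scratch: you show that $I = \{p \in \kk[t] : f'p \in \kk[f]\}$ is a $\kk[f]$-submodule of $\kk[t]$, use the relation $q_2(f)p_1 = q_1(f)p_2$ to bound its rank by one, and invoke freeness of submodules of the free module $\kk[t]$ over the PID $\kk[f]$ to conclude that $I = \kk[f]g_f$, with the degree comparison identifying the generator with $g_f$. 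All the steps check out (the rank-one relation is nontrivial since $q_i(f) = f'p_i \neq 0$ for $p_i \neq 0$, and minimality of $\deg g_f$ does force the generator to be a scalar multiple of $g_f$); the only thing you take on faith is the existence of $g_f$, which the paper's Notation \ref{ntt:L(f,g)} also assumes. The trade-off: the paper's proof is shorter but leans on a citation, while yours is longer, fully self-contained, and in effect supplies a clean structural proof of the cited proposition itself.
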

\begin{proof}
    As noted above, $L(f) \subseteq \WW_1 \cap \WW[f]$, so it remains to show that $\WW_1 \cap \WW[f] \subseteq L(f)$.

    Let $u \in \WW_1 \cap \WW[f] \nonzero$. Since $u \in \WW[f] = \frac{1}{f'}\kk[f]\del$, there exists $h \in \kk[t]$ such that $u = \frac{h(f)}{f'}\del$. Similarly, $u$ being an element of $\WW_1$ implies that $u = g\del$ for some $g \in \kk[t]$. Therefore, we have $f'g = h(f) \in \kk[f]$, and thus $u = g\del \in L(f,g) \subseteq L(f)$, where this last inclusion is from Remark \ref{rem:L(f,g) inclusion}.
\end{proof}

We now state the classification of subalgebras of $\WW_1$ first conjectured in \cite{Buzaglo}.

\begin{thm}[{cf. \cite[Conjecture 5.1]{Buzaglo}}]\label{thm:main}
    Let $L$ be an infinite-dimensional subalgebra of $\WW_1$. Then there exist $f,g \in \kk[t]$ such that $f'g \in \kk[f]$ and
    $$L(f,g) \subseteq L \subseteq L(f).$$
    In particular, $L$ has finite codimension in $L(f)$.

    Furthermore, $f$ are $g$ are unique under the following assumptions:
    \begin{itemize}
        \item $f$ is monic and $f(0) = 0$;
        \item $g$ is monic and has minimal degree such that $L(f,g) \subseteq L$.
    \end{itemize}
    In other words, if $\widetilde{f}$ and $\widetilde{g}$ are polynomials such that $L(\widetilde{f},\widetilde{g}) \subseteq L \subseteq L(\widetilde{f})$, then $\widetilde{f} = \alpha f + \beta$ and $\widetilde{g} = p(f)g$, for some $\alpha,\beta \in \kk$ with $\alpha \neq 0$ and $p \in \kk[t]$. In particular, $L(\widetilde{f}) = L(f)$ and $L(\widetilde{f},\widetilde{g}) \subseteq L(f,g)$.
\end{thm}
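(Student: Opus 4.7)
The plan splits the theorem into an \emph{existence} half and a \emph{uniqueness} half, and reduces the existence half to a single new input. For existence, we already know from \cite[Theorem 5.3]{Buzaglo} that $L \subseteq L(f)$ for $f$ a suitably normalised generator of the field of ratios $F(L) = \kk(f)$. Under the chain of isomorphisms $L(f) \cong \WW[f] \cong \WW_1$ sending $p(f) g_f \del \mapsto p(t)\del$, any finite-codimension subalgebra of $L(f)$ contains a submodule-subalgebra by Proposition \ref{prop:Alexey}; pulling this back to $\WW_1$ gives $L(f,g) \subseteq L$ with $f'g \in \kk[f]$. Thus everything reduces to the claim that $L$ has finite codimension in $L(f)$.

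For this claim, pick $g_0\del \in L$ with $g_0 \in \kk[t]$ monic of degree $d \ge 2$, which exists because $L$ is infinite-dimensional (degenerate cases such as $g_0$ linear can be handled directly via a weight decomposition). A Hensel-type iteration in $\kk((t^{-1}))$ produces a unique $s = t + O(t^{-1})$ satisfying $s^d = g_0 s'$; equivalently $g_0\del = s^d \del_s$ with $\del_s := (1/s')\del$. In the coordinate $s$, every element of $\WW_1 \subseteq \kk((t^{-1}))\del = \kk((s^{-1}))\del_s$ is tautologically a truncated Laurent series $\sum_{n \le N} c_n s^n \del_s$, and the bracket with $g_0\del$ obeys the Witt formula $[s^d\del_s, s^n\del_s] = (n-d)\, s^{n+d-1}\del_s$. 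The key step is to show that for $h\del \in L$, iterating the bracket with $g_0\del$ and demanding that every iterate remain in $\WW_1$ (i.e.\ have polynomial $\del$-coefficient, equivalently have only finitely many positive $s$-powers of a controlled shape) forces the $s$-coefficients of $h\del$ to lie in a subspace of bounded codimension, translating back to $\dim_\kk L(f)/L < \infty$.

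Once finite codimension is in hand, existence of $g$ is just Proposition \ref{prop:Alexey} transported through the isomorphism $L(f) \cong \WW_1$. Uniqueness follows formally: the inclusions $L(\widetilde f, \widetilde g) \subseteq L \subseteq L(\widetilde f)$, combined with monotonicity of $F(\cdot)$ and the identity $F(L(\widetilde f)) = \kk(\widetilde f)$, force $\kk(\widetilde f) = F(L) = \kk(f)$; then L\"uroth's theorem together with $\widetilde f \in \kk[t]$ gives $\widetilde f = \alpha f + \beta$, and the normalisations monic and $\widetilde f(0)=0$ pin down $\alpha,\beta$. With $\widetilde f = f$ fixed, the minimality-of-degree and monic requirements on $\widetilde g$ force $\widetilde g = p(f)g$ for the appropriate $p \in \kk[t]$.

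The main obstacle is the bracket analysis yielding finite codimension. A naive degree count readily controls the high-$s$-degree coefficients of elements of $L$, but pinning down the finitely many bounded-$s$-degree coefficients --- and thereby showing that $L$ is cofinite in $L(f)$ specifically rather than in some larger space --- requires a careful combinatorial argument juggling $t$-polynomial constraints against $s$-Laurent expansions, and separating out the contribution of $L(f)$ in the $s$-basis.
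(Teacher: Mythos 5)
Your high-level architecture matches the paper's: reduce existence to showing that $L$ has finite codimension in $L(f)$ (the containment $L \subseteq L(f)$ and the passage from finite codimension to $L(f,g) \subseteq L$ are exactly Proposition \ref{prop:contained in L(f)} and Lemma \ref{lem:finite codimension in L(f)}), and your Hensel-type construction of $s$ with $g_0\del = s^d\del_s$ is the paper's Lemma \ref{lem:monomial}. But the core of the theorem is precisely the finite-codimension claim, and you have not proved it: you assert that iterating the bracket with $g_0\del$ and ``demanding that every iterate remain in $\WW_1$'' forces the $s$-coefficients of elements of $L$ into a subspace of bounded codimension, and then concede that this ``requires a careful combinatorial argument'' which you do not supply. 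That argument is the entire difficulty. Moreover, the mechanism you point to is doubtful: iterated brackets of elements of $L$ automatically lie in $L \subseteq \WW_1$, so ``remaining in $\WW_1$'' is vacuous as a constraint, and it is unclear how to play polynomiality of $t$-coefficients against $s$-Laurent expansions directly. The paper's actual lever is different. It sets $d = d(L) = \gcd\{\deg(u) \mid u \in L\}$, so that $d$ divides the degree of \emph{every} element of $L$, and proves (Proposition \ref{prop:Veronese}) via explicit iterated brackets such as $[a,[a,[a,b]]] - 6n[b,[a,b]]$ that if one element of $L$ has a nonzero $s$-coefficient at an offset $k$ with $d \nmid k$, then so does every element of different degree; hence $L \cap V_d(s) \neq 0$ forces $L \subseteq V_d(s)$ (Corollary \ref{cor:criterion for containment}). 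Writing one chosen element as $s^{kd+1}\del_s \in V_d(s)$ then yields $L \subseteq V_d(s)$, whence $F(L) \subseteq \kk((s^{-d}))$ and $\deg(f)$ is a multiple of $d$; combined with the easy bound $\deg(f) \leq d$ this gives $\deg(f) = d(L)$, and finite codimension follows from Lemma \ref{lem:large degree}. Note also that your $d$ is the degree of one chosen element $g_0\del$ rather than $d(L)$, and the identity $\deg(f) = d(L)$ is the whole point.

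On uniqueness: the $f$-part of your argument is correct and agrees with the paper ($\kk(f) = F(L) = \kk(\widetilde f)$ plus a M\"obius/L\"uroth argument). The $g$-part has a gap: the theorem asserts $\widetilde g = p(f)g$ for \emph{every} $\widetilde g$ with $L(f,\widetilde g) \subseteq L$, not only for one of minimal degree, and ``minimality forces divisibility'' does not follow formally. The paper needs a B\'ezout argument: writing $f'g = q(f)$, $f'\widetilde g = \widetilde q(f)$ and $r = \gcd(q,\widetilde q)$, one checks that $h = r(f)/f'$ satisfies $L(f,h) \subseteq L(f,g) + L(f,\widetilde g) \subseteq L$, so minimality of $\deg(g)$ forces $r = q$, hence $q$ divides $\widetilde q$ and $\widetilde g = p(f)g$.
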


As noted in \cite{Buzaglo}, in order to prove Theorem \ref{thm:main}, it suffices to prove that if $L$ is an infinite-dimensional subalgebra of $\WW_1$, then there exists a polynomial $f \in \kk[t] \nonzero$ such that $L$ has finite codimension in $L(f)$. For completeness, we give a proof of this fact.

\begin{lem}\label{lem:finite codimension in L(f)}
    Let $f \in \kk[t] \nonzero$ and let $L$ be a subalgebra of $L(f)$ of finite codimension. Then there exists a polynomial $g \in \kk[t] \nonzero$ such that $L(f,g) \subseteq L$.
\end{lem}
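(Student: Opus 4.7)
The plan is to reduce the lemma to Proposition \ref{prop:Alexey} by viewing $L$ as a finite codimension Lie subalgebra of $\WW[f]$, which is isomorphic to $\WW_1$. The key observation is that $L(f)$ itself already has finite codimension in $\WW[f]$: using the identification $g_f\del = h_f(f)\del_f$ from the preceding discussion, one sees that
\[
L(f) = \kk[f]g_f\del = h_f(f)\WW[f],
\]
a submodule-subalgebra of $\WW[f]$ of codimension $\deg(h_f)$. (We may assume $f \notin \kk$, as otherwise $L(f)$ is finite dimensional and the hypothesis forces $L = L(f)$, a case that can be handled directly.)

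Combining this with the hypothesis that $L$ has finite codimension in $L(f)$ gives that $L$ has finite codimension in $\WW[f] \cong \WW_1$, via the Lie algebra isomorphism sending $f \mapsto t$ and $\del_f \mapsto \del$. I would then invoke Proposition \ref{prop:Alexey}, applied inside $\WW[f]$, to produce $q \in \kk[f]\nonzero$ and $n \in \NN$ such that $q(f)^n\WW[f] \subseteq L$.

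It then remains to translate $q(f)^n\WW[f]$ into the $L(f,g)$ notation. Since $q(f)^n\WW[f] \subseteq L \subseteq \WW_1$, the element $q(f)^n\del_f = \tfrac{q(f)^n}{f'}\del$ of $\WW[f]$ must lie in $\kk[t]\del$, forcing $g \coloneqq q(f)^n/f' \in \kk[t]$. Then $f'g = q(f)^n \in \kk[f]$, so $L(f,g) = \kk[f]g\del$ is a genuine Lie subalgebra of $\WW_1$, and it coincides with $q(f)^n\WW[f]$. Hence $L(f,g) \subseteq L$, as required.

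The only mildly non-obvious step is recognising $L(f) = h_f(f)\WW[f]$ as a submodule-subalgebra of $\WW[f]$; after that, the argument is essentially bookkeeping, applying Proposition \ref{prop:Alexey} inside $\WW[f]$ and interpreting its conclusion back inside $\WW_1$.
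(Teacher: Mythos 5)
Your proposal is correct and takes essentially the same route as the paper: both identify $L(f)$ with the finite-codimension submodule-subalgebra $h_f(f)\WW[f]$ of $\WW[f] \cong \WW_1$, apply Proposition \ref{prop:Alexey} to the image of $L$ there, and pull the resulting submodule-subalgebra back to an $L(f,g) \subseteq L$. The only cosmetic difference is that the paper phrases the identification as an isomorphism $L(f) \to h_f\WW_1$ and obtains $g = p(f)g_f$ directly, whereas you work inside $\WW[f]$ and recover $g = q(f)^n/f'$ by observing it must lie in $\kk[t]$.
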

\begin{proof}
    Let $h \in \kk[t]$ such that $f'g_f = h(f)$. There is an isomorphism
    \begin{align*}
        \varphi \colon L(f) &\to h\WW_1 \\
        p(f)g_f\del &\mapsto p(t)h\del,
    \end{align*}
    for $p \in \kk[t]$. Now, $\varphi(L)$ is a subalgebra of $h\WW_1$ of finite codimension, so by Proposition \ref{prop:Alexey}, there exists $p \in \kk[t]$ such that $ph\WW_1 \subseteq \varphi(L)$. Let $g = p(f)g_f$. Note that $f'g \in \kk[f]$ and $\varphi^{-1}(ph\WW_1) = L(f,g)$, which immediately implies that $L(f,g) \subseteq L$.
\end{proof}

For the next result, we will need the following notation.

\begin{dfn}\label{dfn:set/field of ratios}
    Let $L$ be a subalgebra of $\WW_1$. The \emph{set of ratios} of $L$ is
    $$R(L) = \left\{\frac{p}{q} \in \kk(t) \mid p\del, q\del \in L, q \neq 0\right\}.$$
    The \emph{field of ratios} of $L$, denoted $F(L)$, is the subfield of $\kk(t)$ generated by $R(L)$.
\end{dfn}

The following result from \cite{Buzaglo} will be crucial in our proof of Theorem \ref{thm:main}.

\begin{prop}[{\cite[Theorem 5.3]{Buzaglo}}]\label{prop:contained in L(f)}
    Let $L$ be an infinite-dimensional subalgebra of $\WW_1$. Then there exists a polynomial $f \in \kk[t] \nonzero$ such that $F(L) = \kk(f)$ and $L \subseteq L(f)$.
\end{prop}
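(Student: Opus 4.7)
I would prove the proposition in two stages: first exhibit a polynomial $f$ with $F(L) = \kk(f)$, then establish the inclusion $L \subseteq L(f)$.

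\textbf{Stage 1 (existence of a polynomial generator).} By Lüroth's theorem, $F(L) = \kk(r)$ for some $r \in \kk(t) \setminus \kk$. The key tool is the bracket identity
\[
[p\del, q\del] \;=\; (pq' - p'q)\del \;=\; -\rho' q^2\,\del \;\in\; L, \qquad \rho = p/q,
\]
which produces a new ratio $-\rho' q = (pq'-p'q)/q \in R(L) \subseteq \kk(r)$. Choosing some nonconstant $\rho \in R(L)$ (which exists because $R(L)$ generates $\kk(r)$) and writing $\rho = \phi(r)$ with $\phi \in \kk(T)$, the chain rule $\rho' = \phi'(r)\,r'$ gives $\phi'(r)\ne 0$, and hence
\[
r'\,q \in \kk(r)
\]
for every nonzero $q\del \in L$. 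Thus $L_0 := \{q : q\del \in L\} \subseteq (1/r')\kk(r)$. Since $L_0$ is infinite-dimensional, the intersection $(1/r')\kk(r)\cap\kk[t]$ is infinite-dimensional; this is a strong constraint on the pole behaviour of $r$ viewed as a morphism $\PP^1\to\PP^1$. I would exploit it to conclude that $r$ is totally ramified over some point of its target; a Möbius transformation $\mu$ sending that point to $\infty$ then yields $f := \mu(r) \in \kk[t] \nonzero$ with $\kk(f) = \kk(r) = F(L)$.

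\textbf{Stage 2 (the inclusion).} Given $F(L) = \kk(f)$ with $f \in \kk[t]$, I apply the conclusion of Stage 1 with $r = f$: for every $q\del \in L$,
\[
f'q \in \kk(f) \cap \kk[t] \;=\; \kk[f],
\]
where the last equality holds because $\kk[t]$ is a free $\kk[f]$-module (of rank $\deg f$) and $\kk[f]$ is integrally closed in its fraction field $\kk(f)$. Therefore $q\del = (f'q)\cdot(1/f')\,\del \in \kk[f]\,\del_f = \WW[f]$, and by the preceding lemma $L \subseteq \WW_1\cap\WW[f] = L(f)$.

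\textbf{Main obstacle.} Stage 1 is the delicate part, since not every subfield of $\kk(t)$ admits a polynomial generator (for instance $\kk(t+t^{-1})$ does not). The Lie-bracket identity is essential here: by itself Lüroth is too weak, but the bracket pins the elements of $L_0$ to the single $\kk(r)$-line $(1/r')\kk(r) \subseteq \kk(t)$, and the compatibility of this line with $\kk[t]$ is what ultimately forces $r$ to have a totally ramified fibre and thereby admit a polynomial generator.
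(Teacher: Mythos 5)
The paper does not actually prove this proposition; it is imported verbatim from \cite[Theorem 5.3]{Buzaglo}, so there is no internal argument to compare yours against. Judged on its own terms, your Stage 2 and the opening of Stage 1 are correct: the identity $[p\del,q\del]=-\rho'q^2\del$ with $\rho=p/q$ does give $r'q\in\kk(r)$ for every $q\del\in L \nonzero$ (using infinite-dimensionality of $L$ to find $p\del\in L$ with $p/q\notin\kk$), and the integral-closure argument $\kk(f)\cap\kk[t]=\kk[f]$ correctly finishes Stage 2 via the lemma $L(f)=\WW_1\cap\WW[f]$.

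The genuine gap is the step you yourself flag as the main obstacle, and it is not merely unexecuted but mis-stated. From the infinite-dimensionality of $\frac{1}{r'}\kk(r)\cap\kk[t]$ you propose to conclude that $r$ is ``totally ramified over some point of its target'' and then move that point to $\infty$ by a M\"obius transformation $\mu$. But total ramification is not the right condition: your own example $r=t+t^{-1}$ is totally ramified over $2$ and $-2$ (fibres $\{1\}$ and $\{-1\}$), yet $\kk(r)$ admits no polynomial generator --- applying $\mu$ with $\mu(2)=\infty$ yields a rational function whose unique pole is at $t=1$, which is not a polynomial. What you actually need is a point $v_0$ of the target whose fibre is exactly $\{\infty\}$ as a subset of the source $\PP^1$; only then is $\mu\circ r$ a polynomial. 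This stronger statement does follow from your hypothesis, but it requires the local analysis you omit: writing $L_0\subseteq\{\psi(r)/r' : \psi\in\kk(T)\}$, for a source point $c$ with $r(c)=v\in\Aa^1$ of ramification index $e$ one has $\ord_c(\psi(r)/r')=e\ord_v(\psi)-(e-1)$, while for $c$ a pole of $r$ of order $e$ one has $\ord_c(\psi(r)/r')=e\ord_\infty(\psi)+e+1$. If every point of the target $\PP^1$ had a preimage in the source $\Aa^1$, these inequalities (imposed at all $c\in\Aa^1$) would force $\psi$ to be a polynomial of degree at most $2$, so $\frac{1}{r'}\kk(r)\cap\kk[t]$ would have dimension at most $3$, contradicting your hypothesis. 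Supplying this computation, and replacing ``totally ramified over some point'' by ``some fibre equals $\{\infty\}$'', would complete Stage 1 and hence the proof.
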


By Lemma \ref{lem:finite codimension in L(f)} and Proposition \ref{prop:contained in L(f)}, it suffices to prove that if $L$ is an infinite-dimensional subalgebra of $\WW_1$ and $F(L) = \kk(f)$, where $f \in \kk[t]$, then $L$ has finite codimension in $L(f)$. In order to prove this, we will analyse the degrees of elements of $L$.

\begin{ntt}
    For $f \in \kk[t]$, we let the \emph{degree} of $f\del \in \WW_1$ be $\deg(f\del) = \deg(f) - 1$. For a subalgebra $L$ of $\WW_1$, we write $d(L) = \gcd\{\deg(u) \mid u \in L\}$.
\end{ntt}

The following lemma shows that an infinite-dimensional subalgebra $L \subseteq \WW_1$ contains elements whose degrees are arbitrarily large multiples of $d(L)$.

\begin{lem}[{\cite[Lemma 4.7]{Buzaglo}}]\label{lem:large degree}
    Let $L$ be an infinite-dimensional subalgebra of $\WW_1$ and let $d = d(L)$. Then there exists $n \in \NN$ such that for all $m \geq n$, there is an element $u_m \in L$ such that $\deg(u_m) = md$.
\end{lem}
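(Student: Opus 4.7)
The approach is to exploit the Lie bracket of $\WW_1$ to control the degree set $D := \{\deg(u) : u \in L \setminus \{0\}\} \subseteq \{-1, 0, 1, 2, \dots\}$; note that $D$ is unbounded since $L$ is infinite-dimensional while each degree-bounded subspace of $\WW_1$ is finite-dimensional. The structural fact I would rely on is that $D$ is closed under sums of distinct elements: if $u = f\del,\ v = g\del \in L$ with $\deg(u) = a \neq b = \deg(v)$, then the coefficient of $t^{a+b+1}$ in $fg' - f'g$ equals $(b-a)$ times the product of the leading coefficients of $f$ and $g$, which is nonzero, so $\deg[u,v] = a+b$.

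I would first dispose of the case $-1 \in D$: then $\del \in L$ (up to scalar), and iterating $\ad(\del)$ on an element of degree $n$ descends through degrees $n-1, n-2, \dots, -1$. Combined with the unboundedness of $D$, this forces every integer $\geq -1$ into $D$, so $d = 1$ and the lemma is immediate. Henceforth assume $D \subseteq \NN$.

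Now pick distinct $a, b \in D$ and set $d' := \gcd(a,b)$. An induction on $i+j$ (at each step the running sum $(i-1)a + jb$ strictly exceeds $a$, which keeps the distinctness hypothesis of the sum-closure satisfied) gives $\{ia + jb : i, j \geq 1\} \subseteq D$. Then the Frobenius/Chicken McNugget bound shows $D$ contains every sufficiently large multiple of $d'$. If $d' = d$ we are done. Otherwise, choose $c \in D$ with $c \not\equiv 0 \pmod{d'}$ (such $c$ exists since $\gcd(D) = d < d'$), and for each sufficiently large multiple $N$ of $d'$ in $D$ (so eventually $N \neq c$), $c + N \in D$; this yields an arithmetic progression $c + d'\NN_{\geq M} \subseteq D$ in the residue class $c \bmod d'$.

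To cover every residue of $d\ZZ/d'\ZZ$, I consider $R := \{c \bmod d' : c \in D\} \subseteq \ZZ/d'\ZZ$. Given $r_1, r_2 \in R$, choose representatives $c_1, c_2 \in D$, which I may take distinct because each residue class in $R$ already contains an infinite AP by the previous step; then $c_1 + c_2 \in D$ witnesses $r_1 + r_2 \in R$. Thus $R$ is a finite subsemigroup of the group $\ZZ/d'\ZZ$ and therefore a subgroup; since $D$ generates $d\ZZ$ in $\ZZ$ and $R \subseteq d\ZZ/d'\ZZ$, we get $R = d\ZZ/d'\ZZ$. The union of the APs associated to the residues in $R$ then covers every sufficiently large multiple of $d$, as required. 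The main technical obstacle is the careful bookkeeping needed to preserve distinctness at each iterated bracket, especially when upgrading from $d'$-multiples to $d$-multiples via the residue argument.
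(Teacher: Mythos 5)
Your argument is correct. Note that the paper does not prove this lemma itself but imports it from \cite[Lemma 4.7]{Buzaglo}; the proof there runs along essentially the same lines as yours, namely that the degree set of $L$ is closed under sums of distinct elements (because $\deg[u,v]=\deg u+\deg v$ whenever $\deg u\neq\deg v$) and is unbounded, so one concludes by the standard structure of numerical semigroups that it contains all large multiples of its gcd. The only point to tidy up is in the step where you pick ``distinct $a,b\in D$'' and induct to get $\{ia+jb : i,j\geq 1\}\subseteq D$: you must take $a$ and $b$ both \emph{positive} (possible since $D$ is unbounded), as otherwise, e.g.\ with $a=0$, the sum $b+b$ is not a sum of distinct degrees and $2b$ need not be reachable; with $a,b\geq 1$ the inequality $(i-1)a+jb>a$ you invoke does hold and the induction, the Frobenius step, and the residue/subgroup argument all go through.
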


Let $L$ be an infinite-dimensional subalgebra of $\WW_1$ and let $f \in \kk[t]$ such that $F(L) = \kk(f)$ and $L \subseteq L(f)$, which exists by Proposition \ref{prop:contained in L(f)}. It is clear that $d(L(f)) = \deg(f)$. By Lemma \ref{lem:large degree}, if $\deg(f) = d(L)$ then $L$ has finite codimension in $L(f)$. Therefore, in order to prove Theorem \ref{thm:main}, it suffices to prove that $\deg(f) = d(L)$. The next section is devoted to proving this fact.

\section{Proof of Theorem \ref{thm:main}}

As noted above, the goal of this section is to prove the following result.

\begin{prop}\label{prop:degree of f}
    Let $L$ be an infinite-dimensional subalgebra of $\WW_1$ and let $f \in \kk[t]$ such that $F(L) = \kk(f)$ and $L \subseteq L(f)$. Then $\deg(f) = d(L)$.
\end{prop}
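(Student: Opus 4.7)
The plan is to reduce the statement via the isomorphism $\varphi\colon L(f)\to h_f\WW_1$ of Lemma \ref{lem:finite codimension in L(f)}: under $\varphi$ the degree function scales by $\deg f$, so if $L' := \varphi(L) \subseteq h_f\WW_1 \subseteq \WW_1$, then $d(L) = \deg(f)\cdot d(L')$ and $F(L') = \kk(t)$. Since $L\subseteq L(f)$ already forces $\deg f \mid d(L)$, it suffices to prove: if $L\subseteq\WW_1$ is infinite-dimensional with $F(L)=\kk(t)$, then $d(L)=1$. I argue by contradiction, assuming $d:=d(L)\geq 2$.

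By Lemma \ref{lem:large degree}, choose $u=g\del\in L$ of degree exactly $d$ with $g$ monic of degree $d+1$. Applying Hensel's lemma in $\kk((t^{-1}))$, iteratively solve $s^{d+1}=gs'$ for a series $s = t + a_0 + a_{-1}t^{-1} + \cdots$; since $\Char\kk = 0$, at each step the coefficient equation is linear with nonzero leading constant, so every $a_i$ is uniquely determined. Setting $\del_s := s'^{-1}\del$, the identity reads $u=s^{d+1}\del_s$. Every element of $\kk((t^{-1}))\del$ admits a unique formal expansion $w = \sum_{n\leq N} c_n s^n\del_s$ (obtained by iteratively stripping leading $t$-terms); for $v\in L$, the leading $s$-index $N_v = \deg v + 1$ satisfies $N_v \equiv 1 \pmod d$.

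The heart of the proof is the \textbf{Key Claim}: for every $v\in L$, the expansion coefficient $c_n^{(v)}$ vanishes whenever $n\not\equiv 1\pmod d$. To prove this, I would exploit the bracket formula $[u, s^n\del_s] = (n - d - 1)\, s^{n+d}\del_s$, which shows that $\ad u$ preserves the residue of $n$ modulo $d$ and scales each $c_n$ by the factor $n - d - 1$; this factor is nonzero whenever $n\not\equiv 1\pmod d$ (since $d+1\equiv 1\pmod d$). Together with Lemma \ref{lem:large degree}, which guarantees elements of $L$ with arbitrary sufficiently large leading $s$-index in $1 + d\ZZ$, one descends inductively from the top: subtracting top contributions produces elements of $L$ with strictly smaller leading index, while any persistent bad-residue coefficient $c_m^{(v)}$ can be bracketed against $u$ to propagate upward to an element of $L$ whose leading $s$-index lies in the wrong residue class, contradicting the definition of $d(L)$. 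Carrying out this downward induction---in particular, managing the interplay among multiple bad indices at different levels---is the main technical obstacle.

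Granted the Key Claim, every $v=g\del\in L$ satisfies $gs' = \sum c_n s^n \in s\cdot\kk((s^{-d}))$, so every ratio $g_1/g_2\in R(L)$ lies in $\kk((s^{-d}))$ when viewed inside $\kk((t^{-1}))$. Hence $F(L) \subseteq \kk(t)\cap\kk((s^{-d}))$. Since $s = t + a_0 + a_{-1}t^{-1} + \cdots$, every nonzero element of $\kk((s^{-d}))$ has top $t$-exponent divisible by $d$; but $t\in\kk(t)$ has top $t$-exponent $1\notin d\ZZ$ for $d\geq 2$. Therefore $t\notin\kk((s^{-d}))$, so $F(L)\subsetneq\kk(t)$, contradicting $F(L)=\kk(t)$ and forcing $d = 1$.
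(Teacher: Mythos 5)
Your skeleton is the same as the paper's: write one element of $L$ as $s^{D}\del_s$ via Hensel's lemma (the paper's Lemma \ref{lem:monomial}), show that $L$ is contained in the Veronese subalgebra $V_d(s) = \kk((s^{-d}))s\del_s$, and conclude by a valuation argument that $F(L) \subseteq \kk((s^{-d}))$ cannot contain an element of degree prime to $d$. Your opening reduction to the case $F(L)=\kk(t)$ and your closing valuation step are both correct (modulo the small slip that Lemma \ref{lem:large degree} only supplies elements of degree $md$ for all large $m$, not one of degree exactly $d$; this is harmless, since $u=s^{md+1}\del_s$ works just as well in what follows).

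The problem is that your Key Claim is not a technical detail to be ``managed'' --- it is the entire content of the proposition, and the mechanism you propose for it does not work as stated. Bracketing with $u=s^{d+1}\del_s$ sends $s^n\del_s$ to $(n-d-1)s^{n+d}\del_s$, and the factor $n-d-1$ is nonzero for \emph{every} $n\neq d+1$, not just for the bad residues; so $\ad u$ does not annihilate the good coefficients sitting above a bad one, and a bad coefficient can never be promoted to a leading term merely by applying $\ad u$. To expose it you must subtract off other elements of $L$ with matching leading terms, and you then need to control the bad coefficients of \emph{those} elements --- which is exactly the hard point you defer. The paper handles this with Proposition \ref{prop:Veronese}: if $a\in L$ has a nonzero coefficient $\alpha$ at the minimal bad offset $k$ below its leading index $n$, then every monic $b\in L$ of leading index $m\neq n$ has coefficient $\beta$ at offset $k$ satisfying the rigid relation $(m+k)\alpha=(n+k)\beta$, so in particular $\beta\neq 0$. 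Proving this requires computing the iterated brackets $[a,[a,[a,b]]]$ and $[b,[a,b]]$, forming the combination $[a,[a,[a,b]]]-6n[b,[a,b]]$ whose degree drops, and factoring the resulting cubics and quartics --- none of which is a routine downward induction. Once that relation is in hand, a single element of $L$ lying in $V_d(s)$ (supplied by Hensel's lemma applied to any $u\in L$) forces all of $L$ into $V_d(s)$ (Corollary \ref{cor:criterion for containment}), which is your Key Claim. As written, your argument has a genuine gap precisely at this step.
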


The next result is an application of Hensel's lemma which shows that any element $u \in \WW_1$ can be written as $u = s^d\del_s$ for some $s \in \kk((t^{-1}))$ of degree 1, where $d = \deg(u) + 1$. Note that, as before, we make the identification $\del_s = \frac{1}{s'}\del$.

This will be used as follows: if $L$ is an infinite-dimensional subalgebra of $\WW_1$, then we can choose $u \in L \nonzero$ and write it as $u = s^d\del_s$, where $s$ and $d$ are as above. It will then follow that all elements of $L$ are spanned by elements of the form $s^{kd(L)+1}\del_s$ for $k \in \NN$. From this, we deduce Proposition \ref{prop:degree of f}.

\begin{lem}\label{lem:monomial}
    If $g \in \kk[t]$ is monic of degree $d \ge 1$, then there exists $s \in \kk((t^{-1}))$ with $s = t + {\rm lower~degree~terms}$ such that $g\del = s^d \del_s = \frac{s^d}{s'}\del$.
\end{lem}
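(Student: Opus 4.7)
My approach is to construct $s$ as a formal Laurent series in $t^{-1}$, determining its coefficients one at a time so that the identity $s^d = gs'$ (equivalently, $g\del = (s^d/s')\del = s^d\del_s$) is satisfied. The characteristic-zero hypothesis will guarantee that the resulting recursion is always solvable, in the spirit of a Hensel-type iteration.

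Concretely, I would write
$$s = t + \sum_{k \geq 0} b_k t^{-k}, \qquad g = \sum_{i=0}^{d} c_i t^i \quad (c_d = 1),$$
with the $b_k \in \kk$ to be determined, and expand both sides of $s^d = gs'$ as formal Laurent series in $t$. The leading coefficient $t^d$ matches trivially on both sides. For each $k \geq 1$, I would extract the coefficient of $t^{d-k}$ from $s^d - gs'$ and verify that it has the form
$$(d+k-1)\, b_{k-1} - c_{d-k} + R_k(b_0, \ldots, b_{k-2}),$$
where $R_k$ is a polynomial determined by $g$ and $k$, and where $c_j = 0$ for $j < 0$. The crucial scalar $d+k-1$ combines two contributions: the term $d\, b_{k-1} t^{d-k}$ arising in $s^d$ (from choosing one factor $b_{k-1} t^{-(k-1)}$ and $d-1$ factors of $t$), and the term $-(k-1) b_{k-1} t^{d-k}$ arising from $t^d \cdot \bigl(-(k-1) b_{k-1} t^{-k}\bigr)$ in $gs'$. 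The remaining terms involve only $b_0, \ldots, b_{k-2}$ and the coefficients of $g$.

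Since $\Char \kk = 0$ and $d+k-1 \geq 1$, this scalar is invertible in $\kk$. Setting the coefficient of $t^{d-k}$ to zero therefore determines $b_{k-1}$ uniquely in terms of $b_0, \ldots, b_{k-2}$. Iterating over $k = 1, 2, 3, \ldots$ produces a well-defined series $s = t + \sum_{k \geq 0} b_k t^{-k} \in \kk((t^{-1}))$ satisfying $s^d = gs'$, which gives the required identity $g\del = s^d \del_s$.

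The main technical obstacle is the bookkeeping involved in tracking the two series expansions and confirming the asserted form of the coefficient of $t^{d-k}$. The conceptual content, however, is minimal: the only nontrivial input is the observation that $d + k - 1$ is a nonzero scalar in characteristic zero, which is precisely what enables the Hensel-style iterative construction to terminate at each stage.
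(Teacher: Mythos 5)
Your proposal is correct and follows essentially the same route as the paper: both determine the coefficients of $s$ one at a time by a Hensel-style iteration, and your invertible scalar $d+k-1$ is exactly the paper's $n+d$ (with $n = k-1$), arising from the same two contributions ($d\,b_{k-1}$ from $s^d$ and $-(k-1)b_{k-1}$ from $gs'$). The only difference is presentational — you match coefficients in the single equation $s^d = gs'$ directly, whereas the paper phrases the induction via successive truncations $s_n$ with $s_n^d/s_n' = g + O(t^{d-n-1})$.
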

\begin{proof}
    The proof is achieved by Hensel's lemma. We claim that for all $n \ge 0$, there exist $s_n \in \kk[t,t^{-1}]$ such that $s_{i+1} = s_{i} + O(t^{-i})$ and $s_i^d \del_{s_i} = (g + O(t^{d-i-1}))\del$ for $i \ge 0$, or equivalently, $s_i^d/s_i' = g + O(t^{d-i-1})$. We prove this by induction. We let $s_0 = t$. Then $s_0^d \partial_{s_0} = (g + O(t^{d-1}))\del$, so we have the base case.
    
    Now suppose we have constructed $s_0,\cdots,s_n$. Then $s_n^d/s_n' = (g + O(t^{d-n-1}))$, so, noting that $s_n' = 1 + O(t^{-1})$,
    \begin{equation}\label{eq:s_n}
        s_n^d = s_n'(g + \alpha t^{d-n-1}) + O(t^{d-n-2})
    \end{equation}
    for some $\alpha \in \kk$. We let $s_{n+1} = s_n + c t^{-n}$, where $c \in \kk$ is a constant to be determined later. Then
    $$s_{n+1}^d = (s_n + c t^{-n})^d = s_n^d + cd t^{d-n-1} + O(t^{d-n-2}).$$
    By \eqref{eq:s_n}, we have $s_{n+1}^d = s_n'(g + \alpha t^{d-n-1}) + cd t^{d-n-1} + O(t^{d-n-2})$. Since $s_n' = 1 + O(t^{-1})$, it follows that
    $$s_{n+1}^d = s_n'(g + (\alpha + cd)t^{d-n-1}) + O(t^{d-n-2}).$$
    Noting that $s_n' = s_{n+1}' + cn t^{-n-1}$, we get
    \begin{align*}
        s_{n+1}^d &= (s_{n+1}' + cn t^{-n-1})(g + (\alpha + cd)t^{d-n-1}) + O(t^{d-n-2}) \\
        &= s_{n+1}'(g + (\alpha + c(n + d))t^{d-n-1} + O(t^{d-n-2})),
    \end{align*}
    where we used that $g = t^d + O(t^{d-1})$ and $s_{n+1}' = 1 + O(t^{-1})$. Therefore, taking $c = -\frac{\alpha}{n+d}$, we see that $s_{n+1}^d/s_{n+1}' = g + O(t^{d-n-2})$, as required.
\end{proof}

Notice that the $s$ we have constructed is of the form 
$t + {\rm lower~degree~terms}$. In particular, $\kk((t^{-1})) = \kk((s^{-1}))$ and so $\kk((s^{-1}))\del_s = \kk((t^{-1}))\del$.  Moreover, $\WW_1$ is a subalgebra of $\kk((t^{-1}))\del$, so we can work in the Lie algebra $\kk((s^{-1}))\del_s$. We will consider the following subalgebras of $\kk((s^{-1}))\del_s$, which we refer to as \emph{Veronese subalgebras}.

\begin{ntt}
    Let $s \in \kk((t^{-1}))$ such that $s = t + {\rm lower~degree~terms}$ and let $d \geq 1$. We let $V_d(s) = \kk((s^{-d}))s\del_s$. If $s = t$, we simply write $V_d$ instead of $V_d(t)$.
\end{ntt}

The Veronese subalgebra $V_d$ for $d \geq 1$ is simply the subalgebra of $\kk((t^{-1}))\del$ whose elements are of the form $\sum_{k = -\infty}^n \alpha_k e_{kd}$, where $\alpha_k \in \kk$ and $n \in \ZZ$. In other words, when we write elements of $V_d$ as (possibly infinite) linear combinations of the elements $e_n$, the only terms that appear are those whose indices are multiples of $d$.

Similarly, the Lie algebras $V_d(s)$ consist of elements of the form $\sum_{k=-\infty}^n \alpha_k s^{kd+1}\del_s$, where $\alpha_k \in \kk$ and $n \in \ZZ$, which is precisely what we get from $V_d$ under the change of variables $t \mapsto s$.

Given an infinite-dimensional subalgebra $L \subseteq \WW_1$, we would like to show that $L \subseteq V_{d(L)}(s)$ for some suitable choice of $s \in \kk((t^{-1}))$, which would immediately imply that $F(L) \subseteq \kk((s^{-d(L)}))$. Letting $f \in \kk[t]$ such that $F(L) = \kk(f)$, it would then follow that $\deg(f) = d(L)$.

The following result gives useful restrictions for Lie algebras $L$ not contained in $V_{d(L)}$. When we write $e_n + \cdots$, we mean $e_n + {\rm lower~degree~terms}$.

\begin{prop}\label{prop:Veronese}
    Let $L$ be a subalgebra of $\kk((t^{-1}))\del$ and let $d = d(L)$. Suppose $L$ is not contained in $V_d$. Let $k \in \NN$ be minimal such that there is an element
    $$a = e_n + \cdots + \alpha e_{n-k} + \cdots \in L,$$
    where $n = rd$ for some $r \in \NN$, $d$ does not divide $k$, and $\alpha \in \kk \nonzero$. Let $b \in L$ be a monic element such that $m = \deg(b) \neq n$, and let $\beta \in \kk$ be the coefficient of $e_{m-k}$ in $b$ (so $b = e_m + \cdots + \beta e_{m-k} + \cdots$, where $m = r'd$ for some $r' \in \NN$). Then $(m + k)\alpha = (n + k)\beta$. In particular, $\beta \neq 0$.
\end{prop}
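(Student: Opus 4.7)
First, I would establish well-definedness of the relevant coefficient: for any multiple $N$ of $d$ for which $L$ contains a monic element of degree $N$, the coefficient of $e_{N-k}$ in such an element depends only on $N$ and not on the chosen element; call this common value $\phi(N)$. Given two such monic elements $u_1, u_2 \in L$, the difference $u_1 - u_2$ is either zero or has degree $N' < N$ with $d \mid N'$. Since $d \nmid k$, we cannot have $N - N' = k$: either $N' < N - k$, in which case the $e_{N-k}$-coefficient of $u_1 - u_2$ vanishes for degree reasons, or $0 < N - N' < k$, in which case minimality of $k$ applied to a rescaling of $u_1 - u_2$ to be monic forces the coefficient at offset $k - (N - N')$ to vanish, using that $d \mid (N - N')$ and $d \nmid k$ imply $d \nmid \bigl(k - (N - N')\bigr)$.

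Second, I would compute the bracket $[a,b] = \sum_{i,j} a_i b_j (j - i) e_{i+j}$ and extract the coefficient of $e_{n+m-k}$. The only pairs $(i,j)$ with $i + j = n + m - k$, $i \leq n$, $j \leq m$ that can contribute are $(i, j) = (n, m - k)$ and $(n - k, m)$: any intermediate pair $(n - k', m - (k - k'))$ with $0 < k' < k$ would require both $a_{n - k'} \neq 0$ and $b_{m - (k - k')} \neq 0$, and by minimality of $k$ this forces $d \mid k'$ and $d \mid (k - k')$, hence $d \mid k$, contradicting the hypothesis. The surviving contributions sum to $\alpha(m - n + k) + \beta(m - n - k)$, so $[a, b]/(m - n)$ is a monic element of $L$ of degree $n + m$, and its $e_{n+m-k}$-coefficient, equal by the first step to $\phi(n + m)$, gives the functional equation
\[
(m - n)\,\phi(n + m) = \alpha(m - n + k) + \beta(m - n - k). \qquad (\ast)
\]

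Finally, I would argue that $(\ast)$, together with the well-definedness from the first step applied to sufficiently many degrees, forces $(m + k)\alpha = (n + k)\beta$. Iterating the bracket with $a$ produces monic elements $v_j := \ad(a)^j(b) / \prod_{i=0}^{j-1}\bigl(m + (i - 1)n\bigr)$ of degree $jn + m$, and induction on $j$ via $(\ast)$ yields $\phi(jn + m) = A_j \alpha + B_j \beta$ for certain rational coefficients (with $A_0 = 0$, $B_0 = 1$). Computing $\phi\bigl((j_1 + j_2)n + 2m\bigr)$ via $(\ast)$ applied in two distinct ways --- once with the pair $(v_{j_1}, v_{j_2})$ and once with $(b, v_{j_1 + j_2})$ --- well-definedness forces the two expressions to agree. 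For a suitable choice such as $(j_1, j_2) = (1, 2)$, the resulting linear equation in $\alpha, \beta$, simplified using the key identity $(n + k)(m - n + k) + (m + k)(m - n - k) = (m - n)(n + m + k)$, reduces to $(m + k)\alpha = (n + k)\beta$; the conclusion $\beta \neq 0$ is then immediate from $n + k, m + k > 0$ and $\alpha \neq 0$. I expect the main obstacle to lie in this third step, specifically in verifying that the two routes for computing $\phi\bigl((j_1 + j_2)n + 2m\bigr)$ yield a genuinely non-trivial equation rather than an automatic identity --- this is an elementary but careful algebraic check.
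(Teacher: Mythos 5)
Your Steps 1 and 2 are correct, and they isolate exactly the mechanism the paper's proof runs on: your well-definedness of $\phi(N)$ is precisely how the paper argues that $c=[a,[a,[a,b]]]-6n[b,[a,b]]$, whose leading terms cancel, must have vanishing coefficient at $e_{5n-k}$ (its degree is a multiple of $d$, cannot equal $5n-k$, and minimality of $k$ kills the intermediate offsets), and every displayed bracket identity in the paper is an instance of your functional equation $(\ast)$. Indeed, the paper's Step 1 is your Step 3 specialised to $m=2n$, comparing the two routes $[a,[a,[a,b]]]$ and $[b,[a,b]]$ to degree $5n$, and its Step 2 manufactures the ratio $2:1$ by replacing the pair $(a,b)$ with $\bigl([a,b]/(m-n),\,[b,[a,[a,b]]]/(2nm(m-n))\bigr)$. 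So your architecture is the right one.

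The gap is in Step 3, and it is exactly the point you flag and then leave undone: you never verify that your two routes to $\phi\bigl((j_1+j_2)n+2m\bigr)$ give genuinely different linear forms in $\alpha,\beta$, and this cannot be waved away. Not every comparison of routes is informative: the two bracketings $[a,[b,[a,b]]]$ and $[b,[a,[a,b]]]$ of the multiset $\{a,a,b,b\}$ are the \emph{same} element of $L$ by the Jacobi identity, so comparing them yields $0=0$ identically in $n,m,k$. For your choice $(j_1,j_2)=(1,2)$ the two forms do differ for generic parameters (e.g.\ $n=2$, $m=6$, $k=1$, $(\alpha,\beta)=(1,0)$ gives $2935/1152$ via $(b,v_3)$ versus $143/48$ via $(v_1,v_2)$), but a generic computation is not enough: you must show that the difference of the two linear forms, a specific polynomial in $n,m,k$, is nonzero for \emph{every} admissible triple. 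This is where all the content of the paper's proof lives: after factoring, nontriviality there reduces to $6n^2-kn+k^2\neq 0$ and $2n^2+2m^2+kn+km-k^2\neq 0$, and verifying these requires the auxiliary bounds $k\le n+1$ and $k<n+m$ extracted from the hypotheses. Your ``key identity'' $(n+k)(m-n+k)+(m+k)(m-n-k)=(m-n)(n+m+k)$ only certifies that $\phi(N)=\mu(N+k)$ solves $(\ast)$, i.e.\ that \emph{if} the derived relation is nontrivial it must be proportional to $(m+k)\alpha-(n+k)\beta$; it says nothing about nontriviality. Until the analogous factorisation and non-vanishing check is carried out for your chosen routes, the proof is incomplete at its only substantive step.
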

\begin{proof}
    \textbf{Step 1.} We begin by proving this when $m = 2n$, i.e.
    \begin{align*}
        a &= e_n + \cdots + \alpha e_{n-k} + \cdots, \\
        b &= e_{2n} + \cdots + \beta e_{2n-k} + \cdots.
    \end{align*}
    By taking Lie brackets of $a$ with $b$, we get two different elements of degree $5n$ in $L$, as follows:
    \begin{align*}
        [a,b] &= ne_{3n} + \cdots + ((n + k)\alpha + (n - k)\beta)e_{3n-k} + \cdots \\
        [a,[a,b]] &= 2n^2 e_{4n} + \cdots + ((4n^2 + 2kn - k^2)\alpha + (2n^2 - 3kn + k^2)\beta)e_{4n-k} + \cdots \\
        [a,[a,[a,b]]] &= 6n^3 e_{5n} + \cdots \\
        &\quad + ((18n^3 + 4kn^2 - 5k^2n + k^3)\alpha + (6n^3 - 11kn^2 + 6k^2n - k^3)\beta)e_{5n-k} + \cdots \\
        [b,[a,b]] &= n^2e_{5n} + \cdots + ((n^2 - k^2)\alpha + (2n^2 - kn + k^2)\beta)e_{5n-k} + \cdots,
    \end{align*}
    where we used the minimality of $k$ to ensure there are no other contributions to the coefficients. Let $c = [a,[a,[a,b]]] - 6n[b,[a,b]] \in L$. We see that $c$ is an element of degree less than $5n$. Since $d$ does not divide $5n - k$, the degree of $c$ cannot be $5n - k$ (recall that $d$ divides the degree of every element of $L$). Therefore, the coefficient of $e_{5n-k}$ in $c$ must be 0, by minimality of $k$. Hence, we have
    $$(18n^3 + 4kn^2 - 5k^2n + k^3)\alpha + (6n^3 - 11kn^2 + 6k^2n - k^3)\beta = 6n((n^2 - k^2)\alpha + (2n^2 - kn + k^2)\beta),$$
    and therefore
    \begin{equation}\label{eq:2n + k}
        (12n^3 + 4kn^2 + k^2n + k^3)\alpha = (6n^3 + 5kn^2 + k^3)\beta.
    \end{equation}
    We have
    \begin{align*}
        12n^3 + 4kn^2 + k^2n + k^3 &= (2n + k)(6n^2 - kn + k^2), \\
        6n^3 + 5kn^2 + k^3 &= (n + k)(6n^2 - kn + k^2).
    \end{align*}
    We claim that $6n^2 - kn + k^2 \neq 0$. Note that $-1 \leq n - k < n$. If $k < n$, then $6n^2 - kn + k^2 > 5n^2 + k^2 > 0$. Since $d$ does not divide $k$, we cannot have $k = n$. Therefore, the only other possibility is $k = n + 1$. In this case, we have $6n^2 - kn + k^2 = 6n^2 + n + 1 > 0$. This proves the claim.
    
    Dividing \eqref{eq:2n + k} by $(6n^2 - kn + k^2)$, we get
    $$(2n + k)\alpha = (n + k)\beta.$$
    \textbf{Step 2.} We now prove the result for a general $m \in \mathbb{N}$. We have
    \begin{align*}
        [a,b] &= (m - n)e_{n+m} + \cdots + ((m - n + k)\alpha + (m - n - k)\beta)e_{n+m-k} + \cdots \\
        [a,[a,b]] &= m(m - n)e_{2n+m} + \cdots \\
        &+ ((2m^2 - 2nm + km - k^2)\alpha + (m^2 - nm + kn - 2km + k^2)\beta)e_{2n+m-k} + \cdots \\
        [b,[a,[a,b]]] &= 2nm(m - n)e_{2(n+m)} + \cdots \\
        &+ (4nm^2 - 4n^2m - 2km^2 + 4knm - 2k^2n - k^2m + k^3)\alpha e_{2(n+m) - k} \\
        &+ (4nm^2 - 4n^2m + 2kn^2 - 4knm + k^2n + 2k^2m - k^3)\beta e_{2(n+m) - k} + \cdots.
    \end{align*}
    Let $a' = \frac{1}{m-n}[a,b] \in L$, and let $b' = \frac{1}{2nm(m - n)}[b,[a,[a,b]]] \in L$. Letting $\alpha'$ be the coefficient of $e_{n+m-k}$ in $a'$ and $\beta'$ be the coefficient of $e_{2(n+m)-k}$ in $b'$, we see that
    $$(2(n + m) + k)\alpha' = (n + m + k)\beta',$$
    by Step 1. Therefore, we have
    \begin{align*}
        &\frac{(2n + 2m + k)(m - n + k)}{m - n}\alpha + \frac{(2n + 2m + k)(m - n - k)}{m - n}\beta \\
        &\hspace{3cm} = \frac{(n + m + k)(4nm^2 - 4n^2m - 2km^2 + 4knm - 2k^2n - k^2m + k^3)}{2nm(m - n)}\alpha \\
        &\hspace{3cm} \quad + \frac{(n + m + k)(4nm^2 - 4n^2m + 2kn^2 - 4knm + k^2n + 2k^2m - k^3)}{2nm(m - n)}\beta,
    \end{align*}
    and hence
    \begin{align}\label{eq:m + k}
        (2km^3 + 2kn^2m + 2k^2n^2 + 3&k^2m^2 + k^2nm + k^3n - k^4)\alpha \nonumber \\
        &= (2kn^3 + 2knm^2 + 3k^2n^2 + 2k^2m^2 + k^2nm + k^3m - k^4)\beta.
    \end{align}
    Note that
    \begin{align*}
        2km^3 + 2kn^2m + 2k^2n^2 + 3k^2m^2 + k^2nm &+ k^3n - k^4 \\
        &= k(2n^2 + 2m^2 + kn + km - k^2)(m + k), \\
        2kn^3 + 2knm^2 + 3k^2n^2 + 2k^2m^2 + k^2nm &+ k^3m - k^4 \\
        &= k(2n^2 + 2m^2 + kn + km - k^2)(n + k).
    \end{align*}
    Since $n \neq m$, we must have that $k < n$ or $k < m$, and therefore $k < n + m$. Hence, we have
    $$2n^2 + 2m^2 + kn + km - k^2 = 2n^2 + 2m^2 + (n + m - k)k > 0.$$ Dividing \eqref{eq:m + k} by $k(2n^2 + 2m^2 + kn + km - k^2)$, we conclude that
    $$(m + k)\alpha = (n + k)\beta,$$
    as required.
\end{proof}
  
As an immediate consequence of the proposition above, we get a surprising criterion for a subalgebra $L \subseteq \WW_1$ to be contained in $V_{d(L)}(s)$.
  
\begin{cor}\label{cor:criterion for containment}
    Let $L$ be an infinite-dimensional subalgebra of $\kk((t^{-1}))\del$, let $s \in \kk((t^{-1}))$ such that $s = t + {\rm lower~degree~terms}$, and write $d = d(L)$. Then $L \cap V_d(s) \neq 0$ if and only if $L \subseteq V_d(s)$.
\end{cor}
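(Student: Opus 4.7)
The reverse implication is immediate: if $L \subseteq V_d(s)$, then $L \cap V_d(s) = L$, which is nonzero because $L$ is infinite-dimensional. For the forward implication, the plan is to reduce to $s = t$. Since $s = t + {\rm lower~degree~terms}$, the substitution $t \mapsto s$ induces an automorphism of the Lie algebra $\kk((t^{-1}))\del$ that carries $V_d$ to $V_d(s)$ and preserves the degree of every element (because $s^{n+1}/s' = t^{n+1} + {\rm lower~degree~terms}$, so $e_n^{(s)} := s^{n+1}\del_s$ has $t$-degree $n$). Replacing $L$ by its preimage under this automorphism, it suffices to prove that $L \cap V_d \neq 0$ implies $L \subseteq V_d$.

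Assume for contradiction $L \not\subseteq V_d$, and apply Proposition \ref{prop:Veronese} to obtain the minimum gap $k$ and an element $a = e_{n_0} + \cdots + \alpha e_{n_0 - k} + \cdots \in L$ with $\alpha \neq 0$. The key consequence, which I would record up front, is that for every monic $b' \in L$ of degree $m' \neq n_0$ (necessarily a multiple of $d$), Proposition \ref{prop:Veronese} forces the coefficient of $e_{m' - k}$ in $b'$ to be the nonzero scalar $\frac{(m' + k)\alpha}{n_0 + k}$. Now take any nonzero $b \in L \cap V_d$, scaled to be monic, of degree $m$. If $m \neq n_0$, applying this observation to $b$ yields a nonzero coefficient at $e_{m - k}$, contradicting $b \in V_d$ since $d \nmid m - k$.

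The main obstacle is the remaining case $m = n_0$, which I plan to handle by a bootstrapping trick. Using Lemma \ref{lem:large degree}, pick $u \in L$ monic of degree $n$, a multiple of $d$, with $n \neq n_0$ (and $n$ large enough to exist). By the key observation above, $u = e_n + \cdots + \alpha' e_{n - k} + \cdots$ with $\alpha' = \frac{(n + k)\alpha}{n_0 + k} \neq 0$; and since $k$ is the minimum gap for all of $L$, the element $u$ has no non-$V_d$-term at any $e_{n - j}$ for $0 < j < k$, so $u$ itself satisfies the hypotheses to play the role of ``$a$'' in Proposition \ref{prop:Veronese}. Reapplying that proposition with $a$ replaced by $u$ and $b$ of degree $n_0 \neq n$ then forces the coefficient of $e_{n_0 - k}$ in $b$ to be $\frac{(n_0 + k)\alpha'}{n + k} = \alpha \neq 0$, contradicting $b \in V_d$ a second time.
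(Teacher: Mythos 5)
Your proof is correct and follows the paper's approach: change variables to reduce to $s = t$, then use Proposition \ref{prop:Veronese} to rule out any nonzero element of $L \cap V_d$; indeed your bootstrapping step explicitly disposes of the case $\deg(b) = n_0$ (where the proposition does not directly apply because it requires $m \neq n$), which the paper's one-line proof leaves implicit. One minor point: Lemma \ref{lem:large degree} is stated only for subalgebras of $\WW_1$, whereas here $L \subseteq \kk((t^{-1}))\del$; but all you need is a single monic element of degree $\neq n_0$, and this follows directly from $L$ being infinite-dimensional, since if every nonzero element of $L$ had degree $n_0$ the map sending an element to its coefficient of $e_{n_0}$ would be injective on $L$.
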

\begin{proof}
    Suppose $L$ is not contained in $V_d(s)$. Let $k \in \NN$ be minimal such that there is an element
    $$a = s^{n+1}\del_s + \cdots + \alpha s^{n-k+1}\del_s + \cdots \in L,$$
    where $n = rd$ for some $r \in \NN$, $d$ does not divide $k$, and $\alpha \in \kk \nonzero$. Upon changing variables $t \mapsto s$, Proposition \ref{prop:Veronese} implies that no nonzero element of $L$ can be contained in $V_d(s)$, which concludes the proof.
\end{proof}

Combining Lemma \ref{lem:monomial} and Corollary \ref{cor:criterion for containment}, we obtain the following result.

\begin{cor}\label{cor:contained in V}
    Let $L$ be an infinite-dimensional subalgebra of $\WW_1$ and let $d = d(L)$. Then there exists $s \in \kk((t^{-1}))$ such that $s = t + {\rm lower~degree~terms}$ and $L \subseteq V_d(s)$.
\end{cor}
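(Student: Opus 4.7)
The plan is to exhibit a single nonzero element of $L$ that already lies in some $V_d(s)$, and then appeal to Corollary \ref{cor:criterion for containment} to upgrade this to the full containment $L \subseteq V_d(s)$. The serious work has already been done in Proposition \ref{prop:Veronese}; the present corollary amounts to assembling Lemma \ref{lem:monomial}, Lemma \ref{lem:large degree}, and Corollary \ref{cor:criterion for containment}.

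First I would produce a convenient element $u \in L$ on which to use Lemma \ref{lem:monomial}. Since $L$ is infinite-dimensional, Lemma \ref{lem:large degree} supplies, for every sufficiently large $m$, an element of $L$ of degree exactly $md$. Fixing any such $m \geq 1$ and rescaling, I may take $u = g\del \in L$ with $g \in \kk[t]$ monic of degree $md+1$. Since $\deg(g) \geq 1$, Lemma \ref{lem:monomial} applies.

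Thus there exists $s \in \kk((t^{-1}))$ with $s = t + \text{lower degree terms}$ such that
$$u = g\del = s^{md+1}\del_s = s^{md}\cdot s\del_s.$$
Since $s^{md} = (s^{-d})^{-m} \in \kk((s^{-d}))$, this exhibits $u$ as an element of $\kk((s^{-d}))\,s\del_s = V_d(s)$. Therefore $u \in L \cap V_d(s)$ is nonzero, and Corollary \ref{cor:criterion for containment} immediately yields $L \subseteq V_d(s)$, as required.

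I do not anticipate any real obstacle at this stage: the combinatorial bracket computations of Proposition \ref{prop:Veronese} carry all of the difficulty, and the only observation required here is that the exponent $md+1$ furnished by Lemma \ref{lem:monomial} is congruent to $1$ modulo $d$, so that after pulling out a single factor of $s$ the remaining $s^{md}$ automatically sits in $\kk((s^{-d}))$.
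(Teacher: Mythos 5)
Your proof is correct and follows essentially the same route as the paper: produce one nonzero element of $L$ lying in $V_d(s)$ via Lemma \ref{lem:monomial} and then invoke Corollary \ref{cor:criterion for containment}. The only (immaterial) difference is that the paper takes an arbitrary nonzero $u \in L$ and uses that $d = d(L)$ divides $\deg(u)$ by definition of the gcd, whereas you invoke Lemma \ref{lem:large degree} to manufacture an element of degree exactly $md$; both yield an exponent congruent to $1$ modulo $d$.
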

\begin{proof}
    Let $u \in L \nonzero$. By Lemma \ref{lem:monomial}, there exists $s \in \kk((t^{-1}))$ such that $s = t + {\rm lower~degree~terms}$ and $u = s^n\del_s$, where $n = \deg(u) + 1$. Now, $d$ divides $\deg(u) = n - 1$, so $n = kd + 1$ for some $k \in \NN$. Therefore, $u = s^{kd+1}\del_s \in V_d(s)$, and thus $L \cap V_d(s) \neq 0$. By Corollary \ref{cor:criterion for containment}, we have $L \subseteq V_d(s)$.
\end{proof}

We are now ready to prove Proposition \ref{prop:degree of f}.

\begin{proof}[Proof of Proposition \ref{prop:degree of f}]
    Let $d = d(L)$ and let $f \in \kk[t]$ such that $F(L) = \kk(f)$. Note that $\deg(f) \leq d$: by Lemma \ref{lem:large degree}, there exist elements $p\del,q\del \in L \nonzero$ such that $\deg(p) = \deg(q) + d$. Now, $\frac{p}{q} \in \kk(f)$, which would be impossible if $\deg(f) > d$. Therefore, it suffices to prove that $\deg(f) \geq d$.
    
    By Corollary \ref{cor:contained in V}, there exists $s = t + \cdots \in \kk((t^{-1}))$ such that $L \subseteq V_d(s) = \kk((s^{-d}))s\del_s$. Therefore, $F(L) \subseteq \kk((s^{-d}))$, so $f \in \kk((s^{-d}))$.

    Note that the valuation on $\kk((t^{-1}))$ given by taking the order of the pole at $\infty$ has the property that every element of $\kk((s^{-d}))$ has valuation a multiple of $d$. On the other hand, this valuation applied to $f$ is just the degree of $f$, so $f$ has degree a multiple of $d$. Hence, $\deg(f) \geq d$, which completes the proof.
\end{proof}

The proof of Theorem \ref{thm:main} now follows.

\begin{proof}[Proof of Theorem \ref{thm:main}]
    Let $f \in \kk[t]$ be such that $F(L) = \kk(f)$, which exists by Proposition \ref{prop:contained in L(f)}. Certainly, we can choose $f$ to be monic and $f(0) = 0$, since $\kk(f) = \kk(\alpha f + \beta)$ for all $\alpha, \beta \in \kk$ with $\alpha \neq 0$. As noted at the end of Section \ref{sec:preliminaries}, to prove that $L$ has finite codimension in $L(f)$, it suffices to prove that $\deg(f) = d(L)$. This is Proposition \ref{prop:degree of f}.
    
    We now prove that $f$ is unique. Suppose $\widetilde{f} \in \kk[t]$ is such that $L$ has finite codimension in $L(\widetilde{f})$. It must be the case that $F(L) \subseteq \kk(\widetilde{f})$. By Lemma \ref{lem:finite codimension in L(f)}, there exists $\widetilde{g} \in \kk[t] \nonzero$ such that $L(\widetilde{f},\widetilde{g}) \subseteq L$, and thus $\kk(\widetilde{f}) \subseteq F(L)$. We conclude that
    $$\kk(f) = F(L) = \kk(\widetilde{f}).$$
    This equality implies that the map
    \begin{align*}
        \varphi \colon \kk(f) &\to \kk(\widetilde{f}) = \kk(f) \\
        f &\mapsto \widetilde{f}
    \end{align*}
    is an automorphism. Therefore, $\varphi$ is a M\"obius transformation, so there exist $a,b,c,d \in \kk$ with $ab - cd \neq 0$ such that
    $$\widetilde{f} = \varphi(f) = \frac{af + b}{cf + d}.$$
    But $f$ and $\widetilde{f}$ are non-constant polynomials, so it must be the case that $c = 0$. Letting $\alpha = \frac{a}{d}$ and $\beta = \frac{b}{d}$, we see that $\widetilde{f} = \alpha f + \beta$, which proves uniqueness of $f$.

    Now let $g \in \kk[t] \setminus \{0\}$ be a monic polynomial of minimal degree such that $f'g \in \kk[f]$ and $L(f,g) \subseteq L$ (which exists by Lemma \ref{lem:finite codimension in L(f)}), and suppose $\widetilde{g}$ is another polynomial such that $f'\widetilde{g} \in \kk[f]$ and $L(f,\widetilde{g}) \subseteq L$. Let $q,\widetilde{q} \in \kk[t]$ such that
    $$f'g = q(f), \quad f'\widetilde{g} = \widetilde{q}(f),$$
    and let $r = \gcd(q,\widetilde{q})$. By B\'ezout's lemma, there exist $u,v \in \kk[t]$ such that $uq + v\widetilde{q} = r$. We have
    $$r(f) = (u(f)q(f) + v(f)\widetilde{q}(f)) = (u(f)g + v(f)\widetilde{g})f'.$$
    Letting $h = \frac{r(f)}{f'} = u(f)g + v(f)\widetilde{g}$, we see that $f'h \in \kk[f]$ and
    $$L(f,h) = \kk[f]h\del = \kk[f](u(f)g + v(f)\widetilde{g})\del \subseteq \kk[f]g\del + \kk[f]\widetilde{g}\del = L(f,g) + L(f,\widetilde{g}) \subseteq L.$$
    By minimality of $\deg(g)$, it must be the case that $\deg(h) \geq \deg(g)$, which implies that $\deg(r) \geq \deg(q)$. But $r = \gcd(q,\widetilde{q})$, so it follows that $r = q$. Therefore, $q$ divides $\widetilde{q}$, so $\widetilde{q} = pq$ for some $p \in \kk[t]$. Thus, 
    $$\widetilde{g} = \frac{\widetilde{q}(f)}{f'} = \frac{p(f)q(f)}{f'} = p(f)g$$
    and the uniqueness of $g$ follows.
\end{proof}

\section{Consequences of Theorem \ref{thm:main}}

In this section, we collect some consequences of Theorem \ref{thm:main}. The first result follows from Theorem \ref{thm:main} and the observation that for all $f \in \kk[t] \setminus \kk$, we have $L(f) \cong h_f\WW_1$.

\begin{cor}\label{cor:finite codimension isomorphism}
    Let $L$ be an infinite-dimensional subalgebra of $\WW_1$. Then $L$ is isomorphic to a subalgebra of $\WW_1$ of finite codimension.
\end{cor}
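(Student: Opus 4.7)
The plan is to invoke Theorem \ref{thm:main} and transport $L$ inside $\WW_1$ via the explicit isomorphism $L(f) \cong h_f\WW_1$. Concretely, given the infinite-dimensional subalgebra $L \subseteq \WW_1$, Theorem \ref{thm:main} produces a polynomial $f \in \kk[t] \setminus \kk$ with $L \subseteq L(f)$ of finite codimension. So the task reduces to exhibiting an embedding of $L(f)$ into $\WW_1$ as a finite-codimension subalgebra, and then composing.

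Next, I recycle the isomorphism from the proof of Lemma \ref{lem:finite codimension in L(f)}, namely
$$\varphi \colon L(f) \to h_f\WW_1, \qquad p(f)g_f\del \mapsto p(t)h_f\del.$$
Since $\varphi$ is an isomorphism of Lie algebras, $\varphi(L)$ is a Lie subalgebra of $h_f\WW_1$ isomorphic to $L$, and $\dim_\kk(h_f\WW_1/\varphi(L)) = \dim_\kk(L(f)/L) < \infty$. Note also that $h_f \neq 0$ (as $f \notin \kk$ gives $f'g_f \neq 0$), so $h_f\WW_1$ is a genuine submodule-subalgebra of $\WW_1$ of codimension $\deg(h_f)$.

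Finally, I chain the two finite-codimension inclusions
$$\varphi(L) \subseteq h_f\WW_1 \subseteq \WW_1,$$
from which $\dim_\kk(\WW_1/\varphi(L)) \leq \dim_\kk(\WW_1/h_f\WW_1) + \dim_\kk(h_f\WW_1/\varphi(L)) < \infty$. Hence $L \cong \varphi(L)$ is a subalgebra of $\WW_1$ of finite codimension. The substantive input is Theorem \ref{thm:main}; the corollary is essentially bookkeeping via the isomorphism $L(f) \cong h_f\WW_1$, so I anticipate no real obstacle.
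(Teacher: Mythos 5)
Your proposal is correct and follows the same route as the paper: invoke Theorem \ref{thm:main} to get $L$ of finite codimension in $L(f)$, then use the isomorphism $L(f) \cong h_f\WW_1$ to land $L$ inside $\WW_1$ with finite codimension. The paper's proof is just a terser version of your bookkeeping, leaving the isomorphism and the codimension-additivity implicit.
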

\begin{proof}
    By Theorem \ref{thm:main}, there exists $f \in \kk[t] \setminus \kk$ such that $L$ has finite codimension in $L(f)$. Since $L(f)$ is isomorphic to a subalgebra of $\WW_1$ of finite codimension, so is $L$.
\end{proof}

One of the main questions in \cite{Buzaglo} is whether $U(L)$ is noetherian if $L$ is an infinite-dimensional subalgebra of $\WW_1$. Note that when we say \emph{noetherian}, we mean left and right noetherian.

The ring $U(L)$ was already known to be non-noetherian if $L$ is a subalgebra of $\WW_1$ of finite codimension, but the general case was still open. Thanks to Corollary \ref{cor:finite codimension isomorphism}, the general case follows immediately.

\begin{cor}[{cf. \cite[Conjecture 4.1]{Buzaglo}}]\label{cor:noetherian}
    Let $L$ be an infinite-dimensional subalgebra of $\WW_1$. Then $U(L)$ is not noetherian.
\end{cor}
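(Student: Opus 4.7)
The plan is to derive this directly from the just-established Corollary \ref{cor:finite codimension isomorphism}, combined with the known non-noetherianity result for finite codimension subalgebras of $\WW_1$ from \cite{Buzaglo}. The key point is that the universal enveloping algebra construction is functorial: an isomorphism of Lie algebras $L \cong M$ induces an isomorphism of associative algebras $U(L) \cong U(M)$, so noetherianity of $U(L)$ is an invariant of the isomorphism class of $L$.

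First I would invoke Corollary \ref{cor:finite codimension isomorphism} to produce a subalgebra $M \subseteq \WW_1$ of finite codimension together with a Lie algebra isomorphism $L \cong M$. Concretely, by Theorem \ref{thm:main} there exist $f \in \kk[t] \setminus \kk$ and a $\kk[f]$-submodule $L' = L(f,g)$ of $L(f)$ with $L' \subseteq L \subseteq L(f)$; applying the isomorphism $L(f) \cong h_f \WW_1 \subseteq \WW_1$ recalled at the start of Section 3, we transport $L$ to a subalgebra of $\WW_1$ lying between $g h_f \WW_1$ (up to the identification) and $h_f \WW_1$, hence of finite codimension in $\WW_1$.

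Next I would quote the result from \cite{Buzaglo} that $U(M)$ is not left or right noetherian whenever $M \subseteq \WW_1$ has finite codimension. Combining these two facts, $U(L) \cong U(M)$ is not noetherian.

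I do not expect any real obstacle: the whole content has been absorbed into Theorem \ref{thm:main} and its corollary. The only thing worth being careful about is stating explicitly that $U(-)$ preserves isomorphisms, and recording that ``not noetherian'' is interpreted in the left-and-right sense consistent with the paper's conventions, so that the cited result from \cite{Buzaglo} applies on the side one needs.
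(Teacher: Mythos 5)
Your proposal is correct and follows essentially the same route as the paper: apply Corollary \ref{cor:finite codimension isomorphism} to replace $L$ by an isomorphic finite-codimension subalgebra of $\WW_1$, then invoke the known non-noetherianity for such subalgebras (which the paper obtains by combining \cite[Theorem 0.5]{SierraWalton} with \cite[Proposition 2.1]{Buzaglo}). Your remarks about functoriality of $U(-)$ and the left-and-right convention are fine but not points of divergence.
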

\begin{proof}
    By \cite[Theorem 0.5]{SierraWalton}, $U(\WW_1)$ is not noetherian. Now, $L$ is isomorphic to a subalgebra of $\WW_1$ of finite codimension by Corollary \ref{cor:finite codimension isomorphism}, so \cite[Proposition 2.1]{Buzaglo} implies that $U(L)$ is not noetherian.
\end{proof}

We finish the section by considering endomorphisms of $\WW_1$. Zhao conjectured that the Lie algebras $\WW_n$ satisfy the Dixmier conjecture for $n \geq 1$ \cite[Conjecture 1]{Zhao}, in other words, that nonzero endomorphisms of $\WW_n$ are automorphisms. The one-variable case was proved by Du \cite{Du}, but the question remains open for two or more variables. This is an extremely difficult question with deep consequences: Zhao showed that his $n$-variable conjecture implies the $n$-dimensional Jacobian conjecture \cite[Theorem 4.1]{Zhao}. In turn, the $2n$-dimensional Jacobian conjecture implies the Dixmier conjecture for the $n^{\text{th}}$ Weyl algebra \cite{Tsuchimoto,Belov-KanelKontsevich}, so Zhao's conjecture for all $n$ also implies the Dixmier conjecture for Weyl algebras.

We give an alternative and more conceptual proof of Du's result, which follows by showing that proper infinite-dimensional subalgebras of $\WW_1$ are not simple.

\begin{prop}\label{prop:simple subalgebra}
    Let $L$ be an infinite-dimensional simple Lie subalgebra of $\WW_1$. Then $L = \WW_1$.
\end{prop}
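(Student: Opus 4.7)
The plan is to reduce the general case to a finite codimension statement, combining Theorem \ref{thm:main} with a key lemma on simple subalgebras of $\WW_1$ of finite codimension.

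First, I would prove the following \emph{key lemma}: every simple subalgebra $M$ of $\WW_1$ of finite codimension equals $\WW_1$. By Proposition \ref{prop:Alexey}, there exist $f \in \kk[t] \nonzero$ and $n \in \NN$ such that $f^n \WW_1 \subseteq M \subseteq f\WW_1$. A direct bracket computation using $[a\del, b\del] = (ab' - a'b)\del$ shows that $f^m \WW_1$ is a Lie ideal of $f\WW_1$ (and hence of $M$) for every $m \geq 1$: for $u = fa\del$ and $v = f^mb\del$, the bracket equals $f^m\bigl((m-1)af'b + f(ab'-a'b)\bigr)\del$. If $\deg(f) \geq 1$, then $n \geq 1$ (otherwise $\WW_1 \subseteq f\WW_1$, absurd), and $f^{n+1}\WW_1$ is a nonzero ideal of $M$ properly contained in $f^n \WW_1 \subseteq M$, contradicting simplicity. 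Hence $f$ must be a nonzero constant, giving $M = f^n\WW_1 = \WW_1$.

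Next, applying Theorem \ref{thm:main} to $L$ yields $f \in \kk[t]$ with $L$ of finite codimension in $L(f)$. The Lie algebra isomorphism $\varphi \colon L(f) \to h_f \WW_1$, $p(f) g_f \del \mapsto p(t) h_f \del$, maps $L$ to a finite codimension subalgebra $M \coloneqq \varphi(L) \subseteq h_f \WW_1$ of $\WW_1$ (finite codimension in $\WW_1$, since $h_f \WW_1$ itself has codimension $\deg(h_f)$ in $\WW_1$). Since $M$ inherits simplicity from $L$, the key lemma gives $M = \WW_1$; this forces $h_f \WW_1 = \WW_1$, whence $h_f$ is a nonzero constant. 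By the relation $\deg(f) \cdot \deg(h_f) = \deg(f) - 1 + \deg(g_f)$, this occurs precisely when $\deg(f) = 1$, in which case $L(f) = \WW_1$ and $L$ is itself a finite codimension simple subalgebra of $\WW_1$. A final application of the key lemma concludes that $L = \WW_1$.

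The principal obstacle is verifying the ideal property $[f\WW_1, f^m\WW_1] \subseteq f^m\WW_1$, which requires a careful expansion of the bracket and grouping of terms by divisibility by powers of $f$. Beyond that, the argument is a transport-of-structure exercise via $\varphi$ coupled with two applications of the key lemma; no genuinely new ideas about $L$ itself are needed once the finite codimension case is in hand.
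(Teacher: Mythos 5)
Your proof is correct and follows essentially the same route as the paper: reduce to the finite-codimension case via Theorem \ref{thm:main} and the isomorphism $L(f) \cong h_f\WW_1$, then finish with Proposition \ref{prop:Alexey}. The only real difference is the mechanism for contradicting simplicity when $f$ is non-constant: you exhibit $f^{n+1}\WW_1$ as an explicit nonzero proper ideal of $M$, whereas the paper observes that no nonzero subalgebra of $f\WW_1$ is perfect (since $[f\WW_1,f\WW_1] = f^2\WW_1$), so $f\WW_1$ contains no simple subalgebras at all.
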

\begin{proof}
    Note that $L$ cannot be contained in a Lie algebra isomorphic to $f\WW_1$ for any $f \in \kk[t] \setminus \kk$: this is because $f\WW_1$ does not contain any simple subalgebras. An easy way to see this is that nonzero subalgebras of $f\WW_1$ are never perfect, since $[f\WW_1,f\WW_1] = f^2\WW_1$.
    
    Now, Theorem \ref{thm:main} implies that $L$ has finite codimension in $L(f)$ for some $f \in \kk[t] \setminus \kk$. Let $g_f,h_f \in \kk[t]$ be as in Notation \ref{ntt:L(f,g)}, so that $f'g_f = h_f(f) \in \kk[f]$. If $\deg(f) > 1$, then $\deg(f'g_f) \geq 1$, so $h_f$ is non-constant. In this case, $L(f) \cong h_f\WW_1$ with $h_f \in \kk[t] \setminus \kk$, which contradicts the first paragraph. Therefore, $\deg(f) = 1$, so $L(f) = \WW_1$, and thus $L$ has finite codimension in $\WW_1$.
    
    By Proposition \ref{prop:Alexey}, there exist $g \in \kk[t]$ and $n \in \NN$ such that
    $$g^n\WW_1 \subseteq L \subseteq g\WW_1.$$
    By the first paragraph, we see that $g$ must be constant, so $L = \WW_1$.
\end{proof}

The computation of endomorphisms of $\WW_1$ now follows easily.

\begin{cor}\label{cor:Dixmier}
    The Lie algebra $\WW_1$ satisfies the Dixmier conjecture. In other words, $\End(\WW_1) \nonzero = \Aut(\WW_1)$.
\end{cor}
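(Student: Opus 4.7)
The plan is to derive the corollary directly from Proposition \ref{prop:simple subalgebra}, using only the simplicity of $\WW_1$ as an auxiliary input. Let $\varphi \in \End(\WW_1) \setminus \{0\}$. The first step is to observe that $\Ker(\varphi)$ is a Lie ideal of $\WW_1$; since $\WW_1$ is well known to be simple (one can check directly that every nonzero ideal contains some $e_n$ and then, via the relations $[e_n, e_m] = (m-n)e_{n+m}$, all of $\WW_1$), and $\varphi$ is nonzero, we must have $\Ker(\varphi) = 0$. So $\varphi$ is injective.

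Next, consider the image $L = \varphi(\WW_1)$, which is a Lie subalgebra of $\WW_1$. Since $\varphi$ is an isomorphism onto $L$, and $\WW_1$ is infinite-dimensional and simple, $L$ is itself an infinite-dimensional simple subalgebra of $\WW_1$. By Proposition \ref{prop:simple subalgebra}, this forces $L = \WW_1$, so $\varphi$ is also surjective. Hence $\varphi$ is a bijective Lie algebra homomorphism $\WW_1 \to \WW_1$, i.e.\ an automorphism, establishing $\End(\WW_1) \setminus \{0\} \subseteq \Aut(\WW_1)$. The reverse inclusion is trivial.

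There is no real obstacle at this stage: Proposition \ref{prop:simple subalgebra} has already absorbed all the difficulty, and the only small thing to verify is the simplicity of $\WW_1$, which is standard and can simply be cited.
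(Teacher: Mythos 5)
Your proof is correct and follows exactly the same route as the paper: simplicity of $\WW_1$ gives injectivity of a nonzero endomorphism, and Proposition \ref{prop:simple subalgebra} applied to the image (an infinite-dimensional simple subalgebra) gives surjectivity. The extra sketch of why $\WW_1$ is simple is fine but not needed.
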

\begin{proof}
    Let $\varphi \in \End(\WW_1) \nonzero$. Since $\WW_1$ is a simple Lie algebra, $\varphi$ must be injective. By Proposition \ref{prop:simple subalgebra}, $\varphi$ must also be surjective.
\end{proof}

It would be interesting to see if a similar study of subalgebras of $\WW_2$ yields any information about Zhao's conjecture for two variables. In turn, this would give information about the two-dimensional Jacobian conjecture and the Dixmier conjecture for the first Weyl algebra.

\section[Wn]{Subalgebras of $\WW_n$}

In this section, we generalise Corollary \ref{cor:noetherian} to subalgebras of $\WW_n$ of Gelfand--Kirillov (GK) dimension $n$.

\begin{thm}\label{thm:subalgebras Wn}
    Let $n \geq 1$ and let $L$ be a subalgebra of $\WW_n$ of GK-dimension $n$. Then $U(L)$ is not noetherian.
\end{thm}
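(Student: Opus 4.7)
The plan is to follow the three-step outline sketched in the introduction: build a bifiltration of $L$ by nested subspaces $L_{i,j} \subseteq L$ with $L_{i,j} \supseteq L_{i,j+1}$, record the successive codimensions $d_{i,j}(L) = \dim_\kk L_{i,j}/L_{i,j+1}$, prove that whenever $\GKdim L = n$ and $d_{0,0}(L) = \infty$ the enveloping algebra $U(L)$ is non-noetherian, and finally reduce the general case to this one by a change of coordinates in $\kk[x_1,\ldots,x_n]$. The natural candidate for the bifiltration is via depth at a point: for $\mf{m} = (x_1,\ldots,x_n) \subseteq \kk[x_1,\ldots,x_n]$, set $L_{0,j} = L \cap \mf{m}^j \WW_n$, and let the index $i \geq 1$ refine this further (for instance through iterated brackets against $L_{0,0}$, or via a derived-series variant), so that both indices yield descending chains. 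Basic properties such as $L_{i,j} \supseteq L_{i,j+1}$ and the stability under bracketing of neighboring levels should be immediate from the construction.

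The second step is the key technical dichotomy: under $\GKdim L = n$, infinite-dimensionality of $L/L_{0,1}$ should force $U(L)$ to be non-noetherian. I expect this to follow by adapting the Sierra--Walton strategy that proved $U(\WW_1)$ is not noetherian \cite{SierraWalton}: the infinitely many $\kk$-linearly independent classes in $L/L_{0,1}$, combined with the maximality of $\GKdim L$ in $\WW_n$, should produce an infinite strictly ascending chain of one-sided ideals---most plausibly by letting $U(L)$ act faithfully on a polynomial-like module built from the bifiltration and tracking top-degree or leading-coefficient data to rule out finite generation, in the spirit of the $W_{\geq -1}$ non-noetherianity argument.

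The main obstacle, and the conceptual heart of the proof, is the third step: showing that for \emph{any} $L \subseteq \WW_n$ with $\GKdim L = n$, there exists an automorphism of $\kk[x_1,\ldots,x_n]$ after which $d_{0,0}(L) = \infty$. Intuitively, maximal GK-dimension forces $L$ to be large in $\WW_n$, and a generic coordinate system should detect this largeness already at the bottom of the depth filtration; if $d_{0,0}(L)$ were finite in every coordinate system, $L$ would be squeezed into subalgebras of strictly slower growth than $\WW_n$, contradicting $\GKdim L = n$. Making this precise will likely require choosing coordinates adapted to $L$'s ``directions of growth''---perhaps by extracting $n$ algebraically independent rational functions from a coefficient field analogous to the field-of-ratios $F(L)$ used for the $n=1$ case---and then converting the polynomial growth estimate for $\dim_\kk(L \cap \WW_n^{\leq d})$ into an infinite-dimensionality statement at the $(0,0)$-level of the bifiltration. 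I expect the bulk of the technical work to lie in this translation from asymptotic growth to a concrete infinite-codimension statement.
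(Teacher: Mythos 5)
Your proposal reproduces the three-step outline already given in the paper's introduction, but at each of the two hard steps it substitutes an expectation for an argument, and the expectations do not match what actually makes the proof work. First, the bifiltration: the paper does not use the $\mf{m}$-adic filtration $L \cap \mf{m}^j\WW_n$. It sets $L_{ij} = L \cap (\sum_{k \le j} x_1^{i+1}\kk[x_1,\cdots,x_n]\del_k + \sum_{k>j} x_1^i\kk[x_1,\cdots,x_n]\del_k)$, i.e.\ it filters by powers of the single variable $x_1$, with $j$ recording which of the $\del_k$ carry the extra power. This specific choice is load-bearing: a direct computation gives $[L_{ij},L_{ij}] \subseteq L_{i,j+1}$ for $i \geq 1$, so $d_{ij}(L) = \infty$ forces an infinite-dimensional abelianisation and hence non-noetherianity of $U(L_{ij})$. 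Your $\mf{m}$-adic version, and your vague "iterated brackets or derived-series variant" for the index $i$, do not obviously have this property, and nothing in your sketch replaces it. Second, for the key dichotomy ($\GKdim L = n$ and $d_{0,0}(L) = \infty$ imply $U(L)$ non-noetherian), you propose adapting the Sierra--Walton ascending-chain construction; the paper does nothing of the sort. Its argument is much more elementary: if $d_{0,0}(L) = \infty$ and some $L_{k,0}/L_{k+1,0} \neq 0$ with $k \geq 2$, the normaliser of $L_{k,0}$ in $L$ has infinite image in $L/L_{0,1}$, and a direct bracket computation with an element of the normaliser not in $L_{0,1}$ yields a contradiction; Krull's intersection theorem then forces $L_{2,0} = 0$, while a growth estimate (comparing $\dim C_k(V)$, which grows faster than $k^{n-1}$ when $\GKdim L' > n-1$, with the codimension $O(k^{n-1})$ of $(x_1^2)$ in degree $\leq Nk$) shows $L_{2,0}$ is in fact infinite-dimensional.

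The third step is where the gap is most serious. You propose to find a coordinate system in which $d_{0,0}(L) = \infty$ by "choosing coordinates adapted to $L$'s directions of growth," but you concede that the translation from growth to infinite codimension is where "the bulk of the technical work" lies --- and that work is precisely the content of the theorem, not a detail. The paper's actual argument runs in the contrapositive and is of a completely different character: assuming $U(L)$ noetherian, Corollary \ref{cor:d00} applied after each translation $x_1 \mapsto x_1 - \lambda$ shows every quotient $L_\lambda$ is finite-dimensional; to get a \emph{uniform} bound over infinitely many $\lambda$ (needed when $\kk$ is countable), one base-changes to $\kk(t)$, uses noetherianity of $\UU(\overline{L})$ to see that the generic kernel $X$ is finitely generated, and specialises $t = \lambda$ to conclude $\dim_\kk(L_\lambda) \leq N$ for all but finitely many $\lambda$. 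A determinant/minor argument on the matrix of $x_1$-coefficients, using that a nonzero polynomial cannot vanish at infinitely many $\lambda$, then bounds each coefficient space $F_i$ by $N$, so $L$ would be finite-dimensional --- a contradiction. None of these ingredients (the generic-point base change, the uniform specialisation bound, the minor argument) appears in your proposal, so as written it does not constitute a proof.
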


\begin{rem}
    We note that $\WW_n$ is itself a finitely generated Lie algebra of GK-dimension $n$, so the constraint that $L$ has GK-dimension $n$ simply says $L$ is in some sense a ``large'' subalgebra.
\end{rem}

The key to the proof of Theorem \ref{thm:subalgebras Wn} is to introduce subalgebras $L_{ij}$ of $L$ and consider the dimensions of successive quotients $L_{ij}/L_{i,j+1}$. Most of the results in this section are devoted to reducing to the case where almost all of these dimensions are finite. We will then conclude the proof by showing that this is impossible if $L$ has GK-dimension $n$.

\begin{ntt}
    Let $n \geq 1$ and let $L$ be a subalgebra of $\WW_n$. For $i \in \NN$ and $j = 0, \cdots, n - 1$, we let 
    $$L_{ij} = L \cap \left(\sum_{k \le j} x_1^{i+1}\kk[x_1,\cdots,x_n] \del_k  + \sum_{k>j} x_1^i\kk[x_1,\cdots,x_n] \del_k\right).$$
    We will take $L_{i,n} = L_{i+1,0}$. We also write $d_{ij}(L) = \dim_\kk(L_{ij}/L_{i,j+1})$.
\end{ntt}

Note that, for a subalgebra $L$ of $\WW_n$, we have $L_{ij} \supseteq L_{i,j+1}$ for $j = 0,\cdots, d - 2$, and $L_{i,n-1} \supseteq L_{i+1,0}$. We will ultimately have to apply a change of variables (i.e. an automorphism of $\WW_n$) to ensure that we can obtain the conditions needed for the proof of Theorem \ref{thm:subalgebras Wn} to work, but this will only be done at the end.

We will use the following lemma extensively to greatly simplify some proofs. In particular, in order to prove that the enveloping algebra of a Lie algebra $L$ is not noetherian, it suffices to prove this for a subalgebra of $L$.

\begin{lem}[{\cite[Lemma 1.7]{SierraWalton}}]\label{lem:subalgebra}
    Let $L_1$ be a Lie algebra and let $L_2$ be a Lie subalgebra of $L_1$. If $U(L_1)$ is noetherian then $U(L_2)$ is also noetherian.
\end{lem}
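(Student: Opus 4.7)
The plan is to deduce the lemma from the Poincar\'e--Birkhoff--Witt (PBW) theorem, which exhibits $U(L_1)$ as a free $U(L_2)$-module on both sides. Once that structural fact is in hand, the noetherian descent is a standard ``faithfully flat'' style argument, and $1$ being a basis element gives the crucial splitting.

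First I would pick an ordered basis $\{x_\alpha\}_{\alpha \in A}$ of $L_2$, extend it to an ordered basis $\{x_\alpha\}_{\alpha \in A} \cup \{y_\beta\}_{\beta \in B}$ of $L_1$ by choosing a vector-space complement of $L_2$ in $L_1$, and order the combined basis so that every $x_\alpha$ precedes every $y_\beta$. Applying the PBW theorem to $L_1$ with this order, the ordered monomials
\[
x_{\alpha_1}\cdots x_{\alpha_k}\, y_{\beta_1}\cdots y_{\beta_\ell}
\]
form a $\kk$-basis of $U(L_1)$, with the $x$-only monomials (i.e.\ $\ell = 0$) forming the PBW basis of $U(L_2) \subseteq U(L_1)$. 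Letting $M$ denote the $\kk$-span of the ordered $y$-monomials $y_{\beta_1}\cdots y_{\beta_\ell}$ (including the empty monomial $1$), the PBW basis above gives a direct-sum decomposition $U(L_1) = U(L_2) \otimes_\kk M$ as a left $U(L_2)$-module; in particular $U(L_1)$ is free as a left $U(L_2)$-module, with $1 \in M$ as one of the basis elements. Reversing the order of the combined basis yields the analogous statement on the right.

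Next I would show that noetherianity descends. Let $I \subseteq U(L_2)$ be a left ideal and form the extended left ideal $U(L_1)\cdot I \subseteq U(L_1)$. By the freeness above, every element of $U(L_1)$ has a unique expression $\sum_{m \in M} a_m \cdot m$ with $a_m \in U(L_2)$, and $U(L_1)\cdot I$ consists of exactly those elements with every $a_m \in I$. Intersecting with $U(L_2)$ (i.e.\ requiring $a_m = 0$ for $m \neq 1$) recovers $I$, so
\[
U(L_1)\cdot I \cap U(L_2) = I.
\]
Therefore, any strictly ascending chain $I_1 \subsetneq I_2 \subsetneq \cdots$ of left ideals of $U(L_2)$ extends to a strictly ascending chain $U(L_1)I_1 \subsetneq U(L_1)I_2 \subsetneq \cdots$ of left ideals of $U(L_1)$. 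If $U(L_1)$ is left noetherian this is impossible, so $U(L_2)$ is left noetherian. The right-noetherian case is identical, using the right-module version of the PBW decomposition.

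There is no serious obstacle; the only point that could be mishandled is ensuring that the decomposition $U(L_1) = U(L_2) \otimes_\kk M$ really is as a (left or right) $U(L_2)$-module, and that $1$ lies in the complementary factor $M$. Both are immediate once the basis is ordered with the $x_\alpha$'s on one side, so the argument is essentially bookkeeping on top of PBW. A minor sanity check is that the construction requires no finiteness hypothesis on $\dim_\kk L_2$ or $\dim_\kk L_1$, which matches how the lemma is invoked later for infinite-dimensional $L$.
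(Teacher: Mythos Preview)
Your argument is correct. The paper itself does not supply a proof of this lemma; it simply quotes \cite[Lemma~1.7]{SierraWalton}. The proof there is exactly the one you give: PBW with an ordered basis extending a basis of $L_2$ shows that $U(L_1)$ is free as a left (resp.\ right) $U(L_2)$-module with $1$ among the basis elements, so extension of left ideals $I \mapsto U(L_1)I$ is injective and noetherianity descends. There is nothing to add or correct.
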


The next result shows that Lie algebras with infinite-dimensional abelianisations have non-noetherian enveloping algebras.

\begin{lem}\label{lem:derived subalgebra}
    Let $L$ be an infinite-dimensional Lie algebra. If $[L,L]$ has infinite codimension in $L$ then $U(L)$ is not noetherian.
\end{lem}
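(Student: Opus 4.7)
The plan is to exploit the canonical associative algebra surjection $U(L) \surj U(L^{\ab})$ induced by the Lie algebra surjection $L \surj L/[L,L]$, where $L^{\ab} = L/[L,L]$ is the abelianization. The hypothesis that $[L,L]$ has infinite codimension in $L$ says precisely that $L^{\ab}$ is an infinite-dimensional abelian Lie algebra. By the Poincar\'e--Birkhoff--Witt theorem, $U(L^{\ab})$ is canonically isomorphic to the symmetric algebra $\mathrm{Sym}(L^{\ab})$. Fixing any $\kk$-basis $\{x_i\}_{i \in I}$ of $L^{\ab}$, which is necessarily indexed by an infinite set $I$, this symmetric algebra is just the polynomial ring $\kk[x_i : i \in I]$ on infinitely many indeterminates.

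From here, I would invoke the standard elementary observation that a polynomial ring on infinitely many variables is neither left nor right noetherian: choosing any countable sequence of distinct indeterminates $x_1, x_2, \ldots$, the chain of (two-sided) ideals $(x_1) \subsetneq (x_1,x_2) \subsetneq \cdots$ never stabilises. Hence $U(L^{\ab})$ is not noetherian. To conclude, I would apply the standard fact that a quotient of a (left or right) noetherian ring is itself (left or right) noetherian; since $U(L^{\ab})$ is a quotient of $U(L)$ and is not noetherian, $U(L)$ cannot be noetherian either.

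Honestly, there is no substantive obstacle here: the argument is entirely formal and rests only on (i) the PBW identification $U(A) \cong \mathrm{Sym}(A)$ for abelian $A$, (ii) the non-noetherianity of $\kk[x_i : i \in I]$ for infinite $I$, and (iii) the fact that noetherianity descends to quotients. All three are classical.
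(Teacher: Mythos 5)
Your proof is correct and follows exactly the same route as the paper: pass to the abelianisation $L^{\ab} = L/[L,L]$, identify $U(L^{\ab})$ with a polynomial ring in infinitely many variables, observe it is not noetherian, and conclude via the surjection $U(L) \twoheadrightarrow U(L^{\ab})$. Your write-up merely spells out the PBW identification and the non-stabilising chain of ideals in slightly more detail than the paper does.
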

\begin{proof}
    The abelianisation $L^{\ab} = L/[L,L]$ is infinite-dimensional and abelian, so $U(L^{\ab})$ is not noetherian, since it is isomorphic to a polynomial ring in infinitely many variables. The natural map $U(L) \to U(L^{\ab})$ is surjective, so $U(L)$ is also not noetherian.
\end{proof}

In some sense, Lemma \ref{lem:derived subalgebra} says that Lie algebras that are ``almost abelian" have non-noetherian enveloping algebras. The Lie algebras $\WW_n$ have many such subalgebras for $n \geq 2$, but this is not the case for $\WW_1$. This gives some indication as to why the proof of Theorem \ref{thm:subalgebras Wn} is so different depending on whether $n = 1$ or $n \geq 2$.

Using Lemma \ref{lem:derived subalgebra}, we can now reduce to the case where $d_{ij}(L) < \infty$ for $i \geq 1$.

\begin{lem}\label{lem:d's are finite}
    Let $n \geq 1$ and let $L$ be a subalgebra of $\WW_n$ such that $d_{ij}(L) = \infty$ for some $i \geq 1$ and $j \in \{0, \cdots, n - 1\}$. Then $U(L)$ is not noetherian.
\end{lem}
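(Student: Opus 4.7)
The plan is to use $L_{ij}$ itself as a witness to non-noetherianity, via Lemmas \ref{lem:derived subalgebra} and \ref{lem:subalgebra}. Specifically, I would aim to prove
\[
[L_{ij}, L_{ij}] \subseteq L_{i,j+1},
\]
from which it follows that the derived subalgebra of $L_{ij}$ has codimension at least $d_{ij}(L) = \infty$ in $L_{ij}$, so $L_{ij}$ has infinite-dimensional abelianisation.

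It suffices to show the analogous containment $[M_{ij}, M_{ij}] \subseteq M_{i,j+1}$ inside the ambient subspace
\[
M_{ij} = \sum_{k \leq j} x_1^{i+1}\kk[x_1,\ldots,x_n]\del_k + \sum_{k > j} x_1^i\kk[x_1,\ldots,x_n]\del_k \subseteq \WW_n;
\]
intersecting with $L$ then yields the claim for $L_{ij}$. Writing the bracket coordinatewise as $[u,v] = \sum_m\bigl(\sum_k u_k\del_k(v_m) - v_k\del_k(u_m)\bigr)\del_m$, the only coefficient whose required $x_1$-divisibility strengthens between $M_{ij}$ and $M_{i,j+1}$ is that of $\del_{j+1}$, which must rise from $x_1^i$ to $x_1^{i+1}$. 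Summands with $k \neq 1$ cause no difficulty because $\del_k$ then preserves $x_1$-divisibility, immediately giving divisibility by $x_1^{2i} \geq x_1^{i+1}$. The main computation is the $k = 1$ case: when $j \geq 1$, we have $u_1, v_1 \in x_1^{i+1}\kk[x_1,\ldots,x_n]$, so $u_1\del_1(v_{j+1})$ already lies in $x_1^{2i}\kk[x_1,\ldots,x_n]$; when $j = 0$, we have $k = 1 = j+1$, and writing $u_1 = x_1^i\tilde u_1$, $v_1 = x_1^i\tilde v_1$, the \emph{a priori} leading term of $u_1\del_1(v_1) - v_1\del_1(u_1)$ at order $x_1^{2i-1}$ has coefficient $i(\tilde u_1\tilde v_1 - \tilde v_1\tilde u_1) = 0$ by commutativity, leaving a result in $x_1^{2i}\kk[x_1,\ldots,x_n]$. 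Since $i \geq 1$, in each case we obtain divisibility by $x_1^{i+1}$.

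Granting this containment, $L_{ij} = L \cap M_{ij}$ is a Lie subalgebra of $L$, it is infinite-dimensional (as $d_{ij}(L) = \infty$ forces $L_{ij}/L_{i,j+1}$ to be infinite-dimensional), and its derived subalgebra has infinite codimension. Lemma \ref{lem:derived subalgebra} then gives that $U(L_{ij})$ is not noetherian, and Lemma \ref{lem:subalgebra} lifts this to $U(L)$. The main obstacle is really just the coefficient cancellation in the $j = 0$ subcase; note that the argument fundamentally requires $i \geq 1$, which is consistent with the fact that at $i = 0$ one would have $M_{0,0} = \WW_n$ (a perfect Lie algebra), so this method is intrinsically unable to reach that case.
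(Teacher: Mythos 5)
Your proposal is correct and follows essentially the same route as the paper: both establish $[L_{ij},L_{ij}]\subseteq L_{i,j+1}$ by tracking $x_1$-divisibility of the bracket coefficients (with the only delicate cancellation occurring in the $k=\ell=1$, $j=0$ term), and then combine the resulting infinite-dimensional abelianisation with Lemmas \ref{lem:derived subalgebra} and \ref{lem:subalgebra}. Your closing observation that the argument breaks at $i=0$ because $M_{0,0}=\WW_n$ is perfect correctly explains the hypothesis $i\geq 1$.
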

\begin{proof}
    We claim that $[L_{ij},L_{ij}] \subseteq L_{i,j+1}$. Let
    $$u = \sum_{k \leq j} x_1^{i+1} u_k \del_k + \sum_{k > j} x_1^i u_k \del_k, \quad v = \sum_{\ell \leq j} x_1^{i+1} v_\ell \del_\ell + \sum_{\ell > j} x_1^i v_\ell \del_\ell \in L_{ij},$$
    where $u_k,v_\ell \in \kk[x_1,\cdots,x_n]$. Write $w_{k\ell} = [x_1^{i_k} u_k \del_k, x_1^{i_\ell} v_\ell \del_\ell]$, where $i_m = i + 1$ if $m \leq j$ or $i_m = i$ if $m > j$, so that $[u,v] = \sum_{k,\ell=1}^n w_{k\ell}$. We will prove the claim by showing that $w_{k\ell}$ is an element of
    $$(\WW_2)_{i,j+1} = \sum_{k \le j+1} x_1^{i+1}\kk[x_1,\cdots,x_n] \del_k  + \sum_{k>j+1} x_1^i\kk[x_1,\cdots,x_n] \del_k$$
    for all $k,\ell$. We will use the following identity:
    \beq\label{eq:expansion of bracket}
        [x_1^p f \del_k, x_1^q g \del_\ell] = x_1^{p+q-1}\Big((x_1 f \del_k(g) + \delta_{1,k} q fg)\del_\ell \\- (x_1 \del_\ell(f) g + \delta_{1,\ell} p f g)\del_k\Big),
    \eeq
    for $f,g \in \kk[x_1,\cdots,x_n]$ and $p,q \in \NN$. By \eqref{eq:expansion of bracket}, the only nontrivial case is when $i = 1, j = 0$, and at least one of $k$ and $\ell$ is 1. First, if exactly one of $k$ and $\ell$ is 1, say $k \geq 2$ and $\ell = 1$, then \eqref{eq:expansion of bracket} implies that
    $$w_{k,1} = [x_1 u_k \del_k,x_1 v_1 \del_1] = x_1^2 u_k \del_k(v_1) \del_1 - x_1(x_1 \del_1(u_k) v_1 + u_k v_1)\del_k \in (\WW_2)_{1,1}.$$
    If $k = \ell = 1$, then
    $$w_{1,1} = [x_1 u_1 \del_1,x_1 v_1 \del_1] = x_1^2(u_1 \del_1(v_1) - \del_1(u_1) v_1)\del_1 \in (\WW_2)_{1,1}.$$
    This proves the claim.
    
    It follows that the abelianisation $L_{ij}^{\ab} = L_{ij}/[L_{ij},L_{ij}]$ is infinite-dimensional, since $d_{ij}(L) = \infty$, so $U(L_{ij})$ is not noetherian by Lemma \ref{lem:derived subalgebra}. The result now follows by Lemma \ref{lem:subalgebra}.
\end{proof}

We now focus on reducing to $d_{0,0}(L)$ also being finite. This will be achieved by showing that $U(L)$ can only be noetherian if either $d_{0,0}(L) < \infty$ or $L_{2,0} = 0$. We then use the GK-dimension assumption to prove that $L_{2,0}$ is always nonzero.

\begin{prop}\label{prop:dichotomy}
    Let $n \geq 1$ and let $L$ be a subalgebra of $\WW_n$ such that $d_{0,0}(L) = \infty$ and $L_{k,0}/L_{k+1,0} \neq 0$ for some $k \geq 2$. Then $U(L)$ is not noetherian.
\end{prop}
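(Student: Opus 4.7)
The plan is to exploit the given nonzero class in $L_{k,0}/L_{k+1,0}$ to manufacture, via one bracket identity, an infinite-dimensional image in $L_{k-1,0}/L_{k,0}$. Since $k-1\ge 1$ and the filtration $L_{k-1,0}\supseteq L_{k-1,1}\supseteq\cdots\supseteq L_{k-1,n-1}\supseteq L_{k,0}$ has successive quotients of dimensions $d_{k-1,j}(L)$, this will force $d_{k-1,j}(L)=\infty$ for some $j\in\{0,\ldots,n-1\}$, and Lemma \ref{lem:d's are finite} then finishes the job.

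Concretely, I would fix $v\in L_{k,0}\setminus L_{k+1,0}$, write $v=\sum_{j=1}^{n}x_1^{k}v_j\del_j$ with $v_j\in\kk[x_1,\ldots,x_n]$, and set $\tilde v:=\sum_j v_j\del_j\in\WW_n$. The assumption $v\notin L_{k+1,0}$ is equivalent to $\tilde v\notin x_1\WW_n$, i.e. the class $w$ of $\tilde v$ in $\WW_n/x_1\WW_n$ is nonzero. For any $u=\sum_i u_i\del_i\in L$, using $\del_i(x_1^k)=\delta_{i,1}\,k\,x_1^{k-1}$, a direct computation yields the key identity
$$[u,v] \;=\; k\,x_1^{k-1}\,u_1\,\tilde v \;+\; x_1^{k}\,[u,\tilde v],$$
which shows $[u,v]\in L\cap x_1^{k-1}\WW_n = L_{k-1,0}$. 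Under the natural injection $L_{k-1,0}/L_{k,0}\hookrightarrow x_1^{k-1}\WW_n/x_1^{k}\WW_n\cong \WW_n/x_1\WW_n$, the image of $[u,v]$ becomes $k\,p_u\,w$, where $p_u:=u_1\bmod x_1\in\kk[x_2,\ldots,x_n]$.

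The hypothesis $d_{0,0}(L)=\infty$ says precisely that $\{p_u:u\in L\}$ spans an infinite-dimensional subspace of $\kk[x_2,\ldots,x_n]$. Since $w$ is nonzero in $\WW_n/x_1\WW_n\cong \bigoplus_{j=1}^{n}\kk[x_2,\ldots,x_n]\del_j$, at least one of its coordinates is nonzero, and as $\kk[x_2,\ldots,x_n]$ is an integral domain, multiplication by $w$ is an injective $\kk$-linear map $\kk[x_2,\ldots,x_n]\hookrightarrow \WW_n/x_1\WW_n$. Therefore the family $\{[u,v]+L_{k,0}:u\in L\}$ spans an infinite-dimensional subspace of $L_{k-1,0}/L_{k,0}$, completing the plan. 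I anticipate no serious obstacle: the whole argument rests on the single bracket identity above, together with the routine use of the domain property of $\kk[x_2,\ldots,x_n]$ to promote $w\ne 0$ to injectivity of the multiplication map.
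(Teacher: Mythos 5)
Your proof is correct. It rests on the same computational core as the paper's --- the identity $[u,x_1^k\tilde v]=k\,x_1^{k-1}u_1\tilde v+x_1^k[u,\tilde v]$, together with the fact that $\kk[x_2,\ldots,x_n]$ is a domain so that multiplication by the nonzero class $w=\tilde v \bmod x_1\WW_n$ is injective --- but the logical packaging is genuinely different. The paper argues by contradiction: it first uses Lemma \ref{lem:d's are finite} to assume $L_{k,0}/L_{k+1,0}$ and $L_{k-1,0}/L_{k,0}$ are finite-dimensional, deduces from $d_{0,0}(L)=\infty$ that the normaliser $N_L(L_{k,0})$ has infinite-dimensional image in $L/L_{0,1}$, picks $u\in N_L(L_{k,0})\setminus L_{0,1}$, and derives a contradiction from the leading term of $[u,v]$. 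You instead run the pairing in the other direction: for fixed $v\in L_{k,0}\setminus L_{k+1,0}$, the map $u\mapsto[u,v]\bmod L_{k,0}$ has image $k\,\spn\{p_u\}\cdot w$, which is infinite-dimensional because $d_{0,0}(L)=\dim\spn\{p_u\}=\infty$; hence some $d_{k-1,j}(L)=\infty$ with $k-1\geq 1$, and Lemma \ref{lem:d's are finite} finishes. Your route is more direct (no contradiction, no normaliser), and it yields the slightly stronger conclusion that $\dim_\kk(L_{k-1,0}/L_{k,0})=\infty$; the paper's version foregrounds the bilinear map $\tau$ and the finite-codimension kernel argument, but ultimately both proofs are two readings of the non-degeneracy of the same pairing. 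All the small verifications you rely on check out: $L_{i,0}=L\cap x_1^i\WW_n$, the injectivity of $L_{k-1,0}/L_{k,0}\hookrightarrow x_1^{k-1}\WW_n/x_1^k\WW_n$, the identification of $d_{0,0}(L)$ with $\dim\spn\{u_1\bmod x_1\}$, and $k\neq 0$ in characteristic zero.
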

\begin{proof}
    Assume, for a contradiction, that $U(L)$ is noetherian. The Lie bracket gives a bilinear map $\tau \colon L/L_{0,1} \times L_{k,0}/L_{k+1,0} \to L_{k-1,0}/L_{k,0}$. We emphasise that we are only regarding these as quotients of vector spaces, since $L_{0,1}$ is not a Lie ideal of $L$.

    Let $\mc{A} \coloneqq N_L(L_{k,0}) = \{\Phi \in L \mid [\Phi,L_{k,0}] \subseteq L_{k,0}\}$ be the normaliser of $L_{k,0}$ in $L$, and let $\overline{\mc{A}}$ be the image of $\mc{A}$ in $L/L_{0,1}$. We claim that $\overline{\mc{A}}$ is infinite-dimensional. Note that we may assume that $L_{k,0}/L_{k+1,0}$ and $L_{k-1,0}/L_{k,0}$ are finite-dimensional, by Lemma \ref{lem:d's are finite}. Let $w_1,\cdots,w_m$ be a basis for $L_{k,0}/L_{k+1,0}$. For $i = 1,\cdots, m$, define $\tau_i \colon L/L_{0,1} \to L_{k-1,0}/L_{k,0}$ by $\tau_i(v) = \tau(v,w_i)$, where $v \in L/L_{0,1}$. Since $L_{k-1,0}/L_{k,0}$ is finite-dimensional, it follows that $\ker(\tau_i)$ has finite codimension in $L/L_{0,1}$. Therefore, $\overline{\mc{A}} = \bigcap_{i=1}^m \ker(\tau_i)$ is infinite-dimensional, as claimed.

    Let $u = \sum_{i=1}^n u_i\del_i \in \mc{A}$ such that $u \notin L_{0,1}$, in other words, $u_1 \notin x_1 \kk[x_1,\cdots,x_n]$. Let $v = x_1^k\sum_{i=1}^n v_i\del_i \in L_{k,0} \setminus L_{k+1,0}$, which exists by assumption. By definition of $\mc{A}$, we have $[u,v] \in L_{k,0}$. Expanding $[u,v]$, we get
    \begin{align*}
        [u,v] &= \left[u_1\del_1,x_1^k \sum_{i=1}^n v_i\del_i\right] + \left[\sum_{i=2}^n u_i\del_i,x_1^k \sum_{i=1}^n v_i\del_i\right] \\
        &= kx_1^{k-1} u_1 \sum_{i=1}^n v_i\del_i + x_1^k\left(u_1\sum_{i=1}^n \del_1(v_i)\del_i - \sum_{i=1}^n v_i\del_i(u_1)\del_1 + \left[\sum_{i=2}^n u_i\del_i,\sum_{i=1}^n v_i\del_i\right]\right).
    \end{align*}
    Since $[u,v] \in L_{k,0} \subseteq x_1^k\WW_n$, we must have $kx_1^{k-1} u_1 \sum_{i=1}^n v_i\del_i \in x_1^k\WW_n$. We have $u_1 \notin x_1 \kk[x_1,\cdots,x_n]$ by assumption, and therefore $v_i \in x_1 \kk[x_1,\cdots,x_n]$ for all $i \in \{1,\cdots,n\}$. This is a contradiction, since $v \notin L_{k+1,0}$.
\end{proof}

\begin{cor}\label{cor:dichotomy}
    Let $n \geq 1$ and let $L$ be a subalgebra of $\WW_n$ such that $d_{0,0}(L) = \infty$ and $L_{2,0} \neq 0$. Then $U(L)$ is not noetherian.
\end{cor}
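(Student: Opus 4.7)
The plan is to reduce the corollary immediately to Proposition \ref{prop:dichotomy}: what I need is to exhibit some $k \geq 2$ with $L_{k,0}/L_{k+1,0} \neq 0$, and then Proposition \ref{prop:dichotomy}, applied to that $k$, will deliver the conclusion.

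First I would unpack the definition of $L_{k,0}$. Since the sum over the empty index set $k' \leq 0$ is zero, $L_{k,0} = L \cap x_1^k\WW_n$, so the subspaces $L_{2,0} \supseteq L_{3,0} \supseteq L_{4,0} \supseteq \cdots$ form a descending chain inside $L$.

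The key observation is that $\bigcap_{k \geq 0} x_1^k\WW_n = 0$: any nonzero derivation in $\WW_n$ is a polynomial vector field, so its coefficients are divisible by only finitely many powers of $x_1$. Intersecting with $L$ gives $\bigcap_{k \geq 2} L_{k,0} = 0$. The hypothesis $L_{2,0} \neq 0$ therefore forces the chain to strictly decrease at some index past $2$: explicitly, any nonzero $v \in L_{2,0}$ has a well-defined maximal $k \geq 2$ with $v \in L_{k,0}$, and for that $k$ one has $v \in L_{k,0} \setminus L_{k+1,0}$, so $L_{k,0}/L_{k+1,0} \neq 0$.

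With this $k$ in hand, combined with the hypothesis $d_{0,0}(L) = \infty$, Proposition \ref{prop:dichotomy} directly yields that $U(L)$ is not noetherian. Since the corollary is only a short pigeonhole argument on the $x_1$-adic filtration of $\WW_n$, I do not anticipate any real obstacle: the substantive content was already absorbed into Proposition \ref{prop:dichotomy}.
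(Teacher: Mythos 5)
Your proof is correct and is essentially the paper's argument in contrapositive form: both reduce to Proposition \ref{prop:dichotomy} via the fact that the $x_1$-adic filtration $L_{2,0} \supseteq L_{3,0} \supseteq \cdots$ has trivial intersection, the only cosmetic difference being that you verify $\bigcap_k x_1^k\WW_n = 0$ directly from polynomiality where the paper cites Krull's intersection theorem.
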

\begin{proof}
    Assume, for a contradiction, that $U(L)$ is noetherian. Proposition \ref{prop:dichotomy} implies that $L_{i,0}/L_{i+1,0} = 0$ for all $i \geq 2$, in other words, $L_{i,0} = L_{2,0}$ for all $i \geq 2$. By Krull's intersection theorem,
    $$L_{2,0} = \bigcap_{i \geq 2} L_{i,0} = 0,$$
    a contradiction.
\end{proof}

Note that, until now, all the results hold for any subalgebra $L \subseteq \WW_n$. The results that follow are specific to subalgebras of GK-dimension $n$. We first need some notation.

\begin{ntt}
    Let $V$ be a subspace of a Lie algebra $L$. We write $C_0(V) = V$ and
    $$C_N(V) = [C_{N-1}(V),V] = \spn\{[u,v] \mid u \in C_{N-1}(V), v \in V\},$$
    for $N \geq 1$.
\end{ntt}

We now prove that $L_{2,0} \neq 0$ if $\GKdim(L) = n$.

\begin{prop}\label{prop:L2,0 is infinite}
    Let $n \geq 1$ and let $L$ be a subalgebra of $\WW_n$ of GK-dimension $n$. Then $\dim_\kk(L_{2,0}) = \infty$.
\end{prop}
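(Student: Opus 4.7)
The plan is to argue by contraposition: assume $\dim_\kk L_{2,0} < \infty$ and deduce $\GKdim(L) \leq n-1$, contradicting the hypothesis. The entire argument will compare two polynomial growth rates, one coming from the iterated brackets $C_N(V)$ that witness the GK-dimension, and one coming from a simple monomial count in $\WW_n$ modulo $x_1^2 \WW_n$.

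The main tool is the standard degree filtration on $\WW_n$. For $d \geq -1$, set $\WW_n^{[d]} := \spn\{x^\alpha \del_i : 1 \leq i \leq n,\ |\alpha| \leq d+1\}$. A direct computation from the bracket formula $[f\del_i, g\del_j] = f\del_i(g)\del_j - g\del_j(f)\del_i$ shows this is a Lie filtration: $[\WW_n^{[a]}, \WW_n^{[b]}] \subseteq \WW_n^{[a+b]}$. Given any finite-dimensional subspace $V \subseteq L$, pick $d$ with $V \subseteq \WW_n^{[d]}$. An easy induction on $N$ then yields $C_N(V) \subseteq \WW_n^{[(N+1)d]}$, and hence $\sum_{k=0}^N C_k(V) \subseteq \WW_n^{[(N+1)d]}$.

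The crucial quantitative input is an asymmetry of this filtration with respect to $x_1^2 \WW_n$. While $\dim_\kk \WW_n^{[D]}$ grows like $D^n$, the quotient $\WW_n^{[D]}/(\WW_n^{[D]} \cap x_1^2 \WW_n)$ is spanned by the classes of $x^\alpha \del_i$ with $\alpha_1 \in \{0,1\}$ and $|\alpha| \leq D+1$, so a routine binomial count gives $\dim_\kk \WW_n^{[D]}/(\WW_n^{[D]} \cap x_1^2 \WW_n) = O(D^{n-1})$. Intersecting with our subspace, $\bigl(\sum_{k=0}^N C_k(V)\bigr) \cap x_1^2 \WW_n \subseteq L \cap x_1^2 \WW_n = L_{2,0}$, which is finite-dimensional by assumption, so
$$\dim_\kk \sum_{k=0}^N C_k(V) \leq \dim_\kk L_{2,0} + O(N^{n-1}) = O(N^{n-1}).$$

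Since this bound holds for every finite-dimensional $V \subseteq L$, we have $\limsup_N \log \dim_\kk \sum_{k=0}^N C_k(V)/\log N \leq n-1$ for every such $V$. Under the Lie-algebraic definition of GK-dimension (which is the one making $\GKdim(\WW_n) = n$), this forces $\GKdim(L) \leq n-1$, contradicting $\GKdim(L) = n$; hence $L_{2,0}$ must be infinite-dimensional. The main delicate point I anticipate is matching the estimate cleanly to the paper's chosen definition of $\GKdim$ via the $C_N(V)$'s and carrying out the filtered-bracket induction; the dimension counts themselves are elementary.
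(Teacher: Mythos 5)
Your proposal is correct and is essentially the paper's argument run in the contrapositive direction: both rest on the observation that iterated brackets of a finite-dimensional $V$ have coefficient degrees growing linearly in the number of brackets, that the resulting spaces have dimension $O(N^{n-1})$ modulo $x_1^2\WW_n$, and that the GK-dimension hypothesis therefore forces $L\cap x_1^2\WW_n = L_{2,0}$ to absorb infinitely many dimensions. The only cosmetic difference is that the paper argues directly (choosing a finitely generated subalgebra of GK-dimension $>n-1$ and concluding $\dim_\kk(L'_{2,0})=\infty$) while you package the same degree bound as a Lie filtration and derive $\GKdim(L)\le n-1$ from the finiteness of $L_{2,0}$.
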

\begin{proof}
    Since $\GKdim(L) = n$, there is a finitely generated subalgebra $L'$ of $L$ of GK-dimension greater than $n - 1$. We let $u_i = \sum_{j=1}^n u_{ij}\del_j$ for $i = 1,\cdots,m$ be a set of generators of $L'$. Letting
    $$N = \max\{\deg(u_{ij}) \mid 1 \leq i \leq m, 1 \leq j \leq n\}$$
    and $V = \spn\{u_i \mid 1 \leq i \leq m\}$, we have
    $$C_k(V) \subseteq \sum_{j=1}^n\kk[x_1,\cdots,x_n]_{\leq Nk}\del_j,$$ where $\kk[x_1,\cdots,x_n]_{\leq \ell}$ denotes the space of polynomials of degree at most $\ell$. It is well-known that $\dim_\kk(\kk[x_1,\cdots,x_n]_{\leq \ell}) = \binom{n + \ell}{n}$.
    
    We claim that $\kk[x_1,\cdots,x_n]_{\leq Nk} \cap (x_1^2)$ has codimension $\binom{Nk + n - 1}{n - 1} + \binom{Nk + n - 2}{n - 1} = O(k^{n-1})$ in $\kk[x_1,\cdots,x_n]_{\leq Nk}$. To see this, note that
    $$\frac{\kk[x_1,\cdots,x_n]_{\leq Nk}}{\kk[x_1,\cdots,x_n]_{\leq Nk} \cap (x_1^2)} \cong \kk[x_2,\cdots,x_n]_{\leq Nk} \oplus x_1\kk[x_2,\cdots,x_n]_{\leq Nk-1}.$$
    Therefore,
    \begin{align*}
        \dim_\kk\left(\frac{\kk[x_1,\cdots,x_n]_{\leq Nk}}{\kk[x_1,\cdots,x_n]_{\leq Nk} \cap (x_1^2)}\right) &= \dim_\kk(\kk[x_2,\cdots,x_n]_{\leq Nk}) + \dim_\kk(\kk[x_2,\cdots,x_n]_{\leq Nk-1}) \\
        &= \binom{Nk + n - 1}{n - 1} + \binom{Nk + n - 2}{n - 1},
    \end{align*}
    as claimed. By the assumption on the GK-dimension of $L'$, there is some $\varepsilon > 0$ such that $\dim_\kk(C_k(V)) \geq k^{n-1+\varepsilon}$ for $k \gg 0$. Since $\dim_\kk(\frac{C_k(V)}{C_k(V) \cap x_1^2\WW_n}) = O(k^{n-1})$, we must have $\dim_\kk(L'_{2,0}) = \infty$. This concludes the proof.
\end{proof}

Combining Corollary \ref{cor:dichotomy} and Proposition \ref{prop:L2,0 is infinite}, we immediately deduce the following.

\begin{cor}\label{cor:d00}
    Let $n \geq 1$ and let $L$ be a subalgebra of $\WW_n$ of GK-dimension $n$. If $d_{0,0}(L) = \infty$ then $U(L)$ is not noetherian. \qed
\end{cor}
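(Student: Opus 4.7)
The plan is a direct combination of the two results immediately preceding the statement, Corollary \ref{cor:dichotomy} and Proposition \ref{prop:L2,0 is infinite}. Since $\GKdim(L) = n$ by hypothesis, Proposition \ref{prop:L2,0 is infinite} gives $\dim_\kk(L_{2,0}) = \infty$, and in particular $L_{2,0} \neq 0$. Together with the standing assumption $d_{0,0}(L) = \infty$, this places $L$ precisely in the setting of Corollary \ref{cor:dichotomy}, which then delivers the conclusion that $U(L)$ is not noetherian.

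There is essentially no obstacle to overcome: all of the substantive work has already been carried out. Specifically, the normaliser/dichotomy argument of Proposition \ref{prop:dichotomy}, fed into Krull's intersection theorem to produce Corollary \ref{cor:dichotomy}, and the Hilbert-series style codimension estimate $\dim_\kk(C_k(V)/(C_k(V) \cap x_1^2\WW_n)) = O(k^{n-1})$ underlying Proposition \ref{prop:L2,0 is infinite}, together reduce the corollary to a one-line invocation. The only point to verify is that the hypothesis $L_{2,0} \neq 0$ required by Corollary \ref{cor:dichotomy} is implied by the strictly stronger conclusion $\dim_\kk(L_{2,0}) = \infty$ of Proposition \ref{prop:L2,0 is infinite}, which is immediate.
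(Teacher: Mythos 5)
Your proof is correct and is exactly the paper's argument: the paper also derives this corollary immediately by combining Proposition \ref{prop:L2,0 is infinite} (which gives $\dim_\kk(L_{2,0}) = \infty$, hence $L_{2,0} \neq 0$) with Corollary \ref{cor:dichotomy}. No further comment is needed.
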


Thanks to the above results, we have reduced to the case where $d_{ij}(L)$ is finite for all $i \geq 1$ and $j \in \{0,\cdots,n - 1\}$, and $(i,j) = (0,0)$. Note that the same is true if we change variables; in other words, if $\varphi \in \Aut(\WW_n)$, we may assume that $d_{ij}(\varphi(L)) < \infty$ for all $i \geq 1$ and $j \in \{0,\cdots,n - 1\}$, and $(i,j) = (0,0)$. We now prove Theorem \ref{thm:subalgebras Wn} by showing that no such subalgebra $L \subseteq \WW_n$ with $\GKdim(L) = n$ exists.

\begin{proof}[Proof of Theorem \ref{thm:subalgebras Wn}]
    For $n = 1$, this is Corollary \ref{cor:noetherian}, so we assume that $n \geq 2$.
    
    Assume, for a contradiction, that $U(L)$ is noetherian. Note that Corollary \ref{cor:d00} applies for all choices of generators of $\kk[x_1,\cdots,x_n]$. In particular, letting $y_1 = x_1 - \lambda$ and $y_i = x_i$ for $i \geq 2$, the quotient
    $$L_\lambda \coloneqq \frac{L}{L \cap \left(y_1\kk[y_1,\cdots,y_n]\del_1 + \sum_{j = 2}^n \kk[y_1,\cdots,y_n]\del_j\right)},$$
    which generalises $L_{0,0}/L_{0,1} = L_{\lambda = 0}$, is finite-dimensional for all $\lambda \in \kk$. If $\kk$ is uncountable, then it is immediate that there exist $N \in \NN$ and an infinite subset $U \subseteq \kk$ such that $\dim_\kk(L_\lambda) \leq N$ for all $\lambda \in U$. However, if $\kk$ is countable we require a different argument.

    Let $\overline{L} = L \otimes_{\kk} \kk(t)$, viewed as a Lie algebra over $\kk(t)$. This is a Lie subalgebra of
    $$\WW_n^{\kk(t)} = \Der_{\kk(t)}(\kk(t)[x_1,\cdots,x_n]),$$
    where $\Der_{\kk(t)}$ denotes $\kk(t)$-linear derivations. Since $U(L)$ is assumed to be noetherian, it follows that $\UU(\overline{L}) \cong U(L) \otimes_\kk \kk(t)$ is noetherian, by Hilbert's basis theorem, where $\UU(\overline{L})$ denotes the universal enveloping algebra of $\overline{L}$ as a Lie algebra over $\kk(t)$.

    The results above only assumed that $\kk$ is a field of characteristic zero, so they all apply to $\overline{L}$ and $\UU(\overline{L})$. Therefore, letting $y_1 = x_1 - \chi(t)$ and $y_i = x_i$ for $i \geq 2$, the quotient
    $$\overline{L}_{\chi(t)} \coloneqq \frac{\overline{L}}{\overline{L} \cap \left(y_1\kk(t)[y_1,\cdots,y_n]\del_1 + \sum_{j = 2}^n \kk(t)[y_1,\cdots,y_n]\del_j\right)}$$
    is a finite-dimensional vector space over $\kk(t)$ for all $\chi(t) \in \kk(t)$, similarly to above. In particular, $\overline{L}_t$ is finite-dimensional. Let $N = \dim_{\kk(t)}(\overline{L}_t)$.
    
    Let
    $$X = \overline{L} \cap \left((x_1 - t)\kk(t)[x_1,\cdots,x_n]\del_1 + \sum_{j = 2}^n \kk(t)[x_1,\cdots,x_n]\del_j\right),$$
    so that $\overline{L}_t = \overline{L}/X$. Note that $X$ must be finitely generated: if not, then there exists an infinite strictly ascending chain of (finitely generated) subalgebras of $X$, which would imply that $\UU(X)$ is not noetherian by \cite[Proposition 11.1.2]{AmayoStewart}, a contradiction to Lemma \ref{lem:subalgebra}. Let
    $$a_i = (x_1 - t)a_{i,1}\del_1 + \sum_{j=2}^n a_{ij}\del_j$$
    with $i = 1,\cdots,\ell$ be a set of generators of $X$, where $a_{ij} \in \kk(t)[x_1,\cdots,x_n]$.

    Pick elements $b_i = c_i \otimes 1 \in \overline{L}$ for $i = 1,\cdots,N$, where $c_i \in L$, such that
    \begin{equation}\label{eq:codimension of X}
        \overline{L} = \kk(t) b_1 \oplus \cdots \oplus \kk(t) b_N \oplus X.
    \end{equation}
    Let $\overline{w}_1 = w_1 \otimes 1,\cdots,\overline{w}_r = w_r \otimes 1$ be generators of $\overline{L}$, where $w_i \in L$. The equality \eqref{eq:codimension of X} can be witnessed as follows:
    \begin{enumerate}
        \item For each $i = 1,\cdots,r$, we have an expression
        \begin{equation}\label{eq:generators are contained}
            \overline{w}_i = \sum_{j=1}^N \alpha_{ij}(t) b_j + (x_1 - t)f_{i,1} \del_1 + \sum_{j=2}^n f_{ij} \del_j,
        \end{equation}
        for some $\alpha_{ij}(t) \in \kk(t)$ and $f_{ij} \in \kk(t)[x_1,\cdots,x_n]$.
        \item For each $i,j$,
        \begin{equation}\label{eq:closed part 1}
            [b_i,b_j] = \sum_{s=1}^N \beta_{ijs}(t) b_s + (x_1 - t)g_{i,j,1} \del_1 + \sum_{s=2}^n g_{ijs}\del_s,
        \end{equation}
        for some $\beta_{ijs}(t) \in \kk(t)$ and $g_{ijs} \in \kk(t)[x_1,\cdots,x_n]$.
        \item For each $i,j$,
        \begin{equation}\label{eq:closed part 2}
            [a_i,b_j] = \sum_{s=1}^N \gamma_{ijs}(t) b_s + (x_1 - t) h_{i,j,1} \del_1 + \sum_{s=2}^n h_{ijs} \del_s,
        \end{equation}
        for some $\gamma_{ijs} \in \kk(t)$ and $h_{ijs} \in \kk(t)[x_1,\cdots,x_n]$.
    \end{enumerate}
    Indeed, \eqref{eq:generators are contained} is saying that $\kk(t) b_1 \oplus \cdots \oplus \kk(t) b_N \oplus X$ contains a set of generators of $\overline{L}$, while \eqref{eq:closed part 1} and \eqref{eq:closed part 2} imply that $\kk(t) b_1 \oplus \cdots \oplus \kk(t) b_N \oplus X$ is closed under the Lie bracket of $\WW_n^{\kk(t)}$, giving the desired equality. Now, there are only finitely many polynomials $\alpha_{ij}(t), \beta_{ijs}(t), \gamma_{ijs}(t), a_{ij}, f_{ij}, g_{ijs},h_{ijs}$, so there are only finitely many poles of the coefficients of these polynomials.
    
    In particular, \eqref{eq:generators are contained} implies that for all but finitely many specializations $t = \lambda$ with $\lambda \in \kk$, we have a relation:
    $$w_i = \sum_{j=1}^N \alpha_{ij}(\lambda)c_j + (x_1 - \lambda) \widetilde{f}_{i,1} \del_1 + \sum_{j=2}^n \widetilde{f}_{ij} \del_j,$$
    where $\widetilde{f}_{ij}$ denotes $f_{ij}$ after we substitute $t = \lambda$ into all the coefficients. This implies that a set of generators of $L$ is contained in $\kk c_1 + \cdots + \kk c_N + \widetilde{X}_\lambda$, where
    $$\widetilde{X}_\lambda = L \cap \left((x_1 - \lambda)\kk[x_1,\cdots,x_n]\del_1 + \sum_{j = 2}^n \kk[x_1,\cdots,x_n]\del_j\right).$$
    Similarly, we can get relations corresponding to \eqref{eq:closed part 1} and \eqref{eq:closed part 2}, which tells us that $\kk c_1 + \cdots + \kk c_N + \widetilde{X}_\lambda$ is closed under the Lie bracket of $\WW_n$. Therefore,
    $$L = \kk c_1 + \cdots + \kk c_N + \widetilde{X}_\lambda,$$
    which means that $\dim_\kk(L_{\lambda}) \leq N$ for all but finitely many $\lambda \in \kk$.

    In other words, even when $\kk$ is countable, there exist $N \in \NN$ and an infinite subset $U \subseteq \kk$ such that $\dim_\kk(L_\lambda) \leq N$ for all $\lambda \in U$.

    We claim that $\dim_\kk(F_1) \leq N$, where
    $$F_1 = \spn\{f \in \kk[x_1,\cdots,x_n] \mid f\del_1 + \sum_{j=2}^n g_j\del_j \in L \text{ for some } g_j \in \kk[x_1,\cdots,x_n]\}.$$
    Assume, for a contradiction, that there exist $N + 1$ elements $u_i = \sum_{j=1}^n u_{ij}\del_j \in L$ such that $u_{i,1}$ are linearly independent for $i = 1,\cdots,N + 1$. Write $v_i = u_{i,1}$ for $i = 1,\cdots,N + 1$. Since $\dim_\kk(L_\lambda) \leq N$ for $\lambda \in U$, we must have that $v_1,\cdots,v_{N+1}$ are linearly dependent modulo $x_1 - \lambda$ for all $\lambda \in U$. Write
    $$v_i = \sum_{\mathbf{k} \in \NN^{n-1}} P_{\mathbf{k}}^{(i)}(x_1)x_2^{k_2} \cdots x_n^{k_n},$$
    where $\mathbf{k} = (k_2,\cdots,k_n)$ and $P_{\mathbf{k}}^{(i)}(x_1) \in \kk[x_1]$. Let
    $$I = \{\mathbf{k} \in \NN^{n-1} \mid P_{\mathbf{k}}^{(i)}(x_1) \neq 0 \text{ for some } i \in \{1,\cdots,N+1\}\}.$$
    For $i \in \{1,\cdots,N + 1\}$ and $\mathbf{k} \in I$, let $c_{i,\mathbf{k}} = P_{\mathbf{k}}^{(i)}(x_1)$. Let $C = (c_{i,\mathbf{k}}) \in M_{(N+1) \times |I|}(\kk[x_1])$ be a matrix with $N + 1$ rows, and columns labelled by elements of $I$. Since $v_1,\cdots,v_{N+1}$ are linearly independent, we have $|I| \ge N + 1$ and there is some $(N + 1) \times (N + 1)$ nonzero minor $P \in \kk[x_1]$ of $C$. On the other hand, $P \in (x_1 - \lambda)\kk[x_1]$ for all $\lambda \in U$, since $v_1(\lambda,x_2,\cdots,x_n),\cdots,v_{N+1}(\lambda,x_2,\cdots,x_n)$ are linearly dependent. But then
    $$P \in \bigcap_{\lambda \in U} (x_1-\lambda)\kk[x_1] = 0,$$
    a contradiction.
    
    Let
    $$F_i = \spn\{f \in \kk[x_1,\cdots,x_n] \mid f\del_i + \sum_{\substack{j=1 \\ j \neq i}}^n g_j\del_j \in L \text{ for some } g_j \in \kk[x_1,\cdots,x_n]\}.$$
    We have shown that $F_1$ is finite-dimensional. Since we can do this for all variables, we see that $F_i$ is finite-dimensional for all $i \in \{1,\cdots,n\}$, and therefore $L$ is finite-dimensional, a contradiction.
\end{proof}

\end{document}